\definecolor{webgreen}{rgb}{0,.5,0}
\def\N{{\mathds{N}}}
\def\Z{{\mathds{Z}}}
\def\P{{\mathds{P}}}
\def\1{{\bf 1}}
\def\nr{{\trianglelefteq}}
\def\lcm{\operatorname{lcm}}
\newtheorem{theorem}{Theorem}
\newtheorem{thm}[theorem]{Theorem}
\newtheorem{cor}[theorem]{Corollary}
\newtheorem{conj}[theorem]{Conjecture}
\newtheorem{prop}[theorem]{Proposition}
\begin{document}

\title{\bf The number of subgroups of the group $\Z_m\times \Z_n \times \Z_r \times \Z_s$}
\author{L\'aszl\'o T\'oth}
\date{}
\maketitle

\begin{abstract} We deduce direct formulas for the total number of subgroups and the number of subgroups of a given order of the
group $\Z_m \times \Z_n \times \Z_r \times \Z_s$, where $m,n,r,s\in \N$. The proofs are by some simple group theoretical and number theoretical
arguments based on Goursat's lemma for groups. Two conjectures are also formulated.
\end{abstract}

{\sl 2010 Mathematics Subject Classification}:  20K01, 20K27, 11A25, 11Y70

{\it Key Words and Phrases}: cyclic group, direct product, subgroup,
number of subgroups, Goursat's lemma, finite abelian group of rank
four

\section{Introduction}

Throughout the paper we use the following notation: $\N:=\{1,2,\ldots\}$, $\N_0:=\N \cup \{0\}$; $\P$ is the set of primes;
the prime power factorization of $n\in \N$ is $n=\prod_{p\in \P} p^{\nu_p(n)}$, where all but
a finite number of the exponents $\nu_p(n)$ are zero; $\Z_n$ denotes the additive group of residue classes modulo $n$ ($n\in
\N$); $\gcd(a,b)$ and $\lcm(a,b)$ denote the greatest common divisor and least common
multiple, respectively of $a,b\in \N$; $\varphi$ is Euler's arithmetic function.

For an arbitrary finite abelian group $G$ of order $n$ let $N(G)$
and $N(G;k)$ denote the total number of subgroups, respectively the
number of subgroups  of order $k$ of $G$ ($k\mid n$). Let
$G= \bigtimes_{p\in \P} G_p$
be the primary decomposition of $G$, where
$|G_p| = p^{\nu_p(n)}$ ($p\in \P$). Then
\begin{equation} \label{prod_N}
N(G)=\prod_{p\in \P} N(G_p),
\end{equation}
\begin{equation} \label{prod_N_k}
N(G;k)=\prod_{p\in \P} N(G_p;p^{\nu_p(k)}).
\end{equation}

Therefore, the problem of counting the subgroups of $G$ reduces to $p$-groups. It is known that the number of subgroups of a given
type in an abelian $p$-group is a polynomial in $p$ with positive integral coefficients, which can be represented in terms of the
gaussian coefficients. There are also various other formulas in the literature. We refer to the monograph
by Butler \cite{But1994} and the papers by Bhowmik \cite{Bho1996}, C\u{a}lug\u{a}reanu \cite{Cal2004}, Shokuev \cite{Sho1972},
Stehling \cite{Ste1992}.

It follows that in an abelian $p$-group the total number of subgroups and the number of subgroups of a given order are also polynomials in $p$ with
positive integral coefficients. However, it is a difficult task  to make explicit these polynomials, even in the case of $p$-groups of
small rank. The rank two polynomials are
\begin{equation} \label{pol_rank_two}
N(\Z_{p^a}\times \Z_{p^b})= \frac{(b-a+1)p^{a+2}-(b-a-1)p^{a+1}-(a+b+3)p+(a+b+1)}{(p-1)^2},
\end{equation}
and
\begin{equation} \label{pol_rank_two_order}
N(\Z_{p^a}\times \Z_{p^b};p^c)= \begin{cases} \frac{p^{c+1}-1}{p-1}, & c\le a\le
b,\\ \frac{p^{a+1}-1}{p-1}, & a\le c\le b,
\\ \frac{p^{a+b-c+1}-1}{p-1},
& a\le b\le c\le a+b,
\end{cases}
\end{equation}
representing the total number of subgroups, respectively the number
of subgroups of order $p^c$ of the rank two group $\Z_{p^a}\times
\Z_{p^b}$ with $1\le a\le b$, $0\le c \le a+b$. See
C\u{a}lug\u{a}reanu \cite{Cal2004}, Petrillo \cite[Prop.\
2]{Pet2011}, T\u{a}rn\u{a}uceanu \cite[Th.\ 3.3]{Tar2010}).

Now consider the group $\Z_m\times \Z_n$, where $m,n\in \N$. Note that $\Z_m\times \Z_n$ is isomorphic to $\Z_{\gcd(m,n)}\times \Z_{\lcm(m,n)}$,
and it has rank two, assuming that $\gcd(m,n)>1$. Recall that a finite abelian group of order $>1$ has rank $t$ if it is isomorphic to
$\Z_{n_1} \times \cdots \times \Z_{n_t}$, where $n_1,\ldots,n_t \in \N\setminus \{1\}$ and $n_j \mid n_{j+1}$ ($1\le j\le t-1$), which
is the invariant factor decomposition of the given group. Here the number $t$ is uniquely determined and represents the minimal number of
generators of the given group.

For every $m,n\in \N$ the number of all subgroups of $\Z_m\times \Z_n$ is
\begin{equation} \label{rank_two}
N(\Z_m \times \Z_n) = \sum_{\substack{i\mid m\\ j\mid n}} \gcd(i,j),
\end{equation}
and the number of its subgroups of order $k$ ($k \mid
mn$) is given by
\begin{equation} \label{rank_two_order_k}
N(\Z_m \times \Z_n;k)= \sum_{\substack{i\mid \gcd(m,k)\\ j\mid \gcd(n,k)\\ k\mid
ij}} \varphi \left(\frac{ij}{k}\right).
\end{equation}

Formulas \eqref{rank_two} and \eqref{rank_two_order_k}, established by the author \cite[Ths.\ 4.1, 4.3]{TotTatra2014},
generalize and put in more compact forms identities \eqref{pol_rank_two} and \eqref{pol_rank_two_order}, respectively. See also the paper
by Hampejs, Holighaus, the author, and Wiesmeyr \cite{HHTW2014} for a different approach.

The polynomial giving the total number of subgroups of the rank three group $\Z_{p^a}\times \Z_{p^b}\times \Z_{p^c}$ was obtained by
Oh \cite[Cor.\ 2.2]{Oh2013} and by T\u{a}rn\u{a}uceanu and the author \cite[Cor.\ 2.9]{TarTot}. Formulas similar to
\eqref{rank_two} and \eqref{rank_two_order_k}, concerning the numbers of all subgroups, respectively subgroups of order
$k$ of the group $\Z_m\times \Z_n\times \Z_r$ ($m,n,r,k\in \N, k\mid mnr$) were given by
Hampejs and the author \cite[Th.\ 2.2]{HamTot2013}.

The next step is to investigate rank four groups, which is the goal of the present paper. The special case of the group
$\Z_p \times \Z_p \times \Z_p \times \Z_{p^i}$ ($i\in \N$) was recently considered by Chew, Chin and Lim \cite{Che2015}.
Instead of $p$-groups we prefer to investigate the group $\Z_m \times \Z_n \times \Z_r \times \Z_s$ with $m,n,r,s\in \N$,
which has rank four, provided that $\gcd(m,n,r,s)>1$. We deduce direct formulas for
\begin{equation*}
N(m,n,r,s):=N(\Z_m \times \Z_n \times \Z_r \times \Z_s)
\end{equation*}
and
\begin{equation*}
N(m,n,r,s;k):=N(\Z_m \times \Z_n \times \Z_r \times \Z_s;k),
\end{equation*}
representing the total number of subgroups of the group $\Z_m \times \Z_n \times \Z_r \times \Z_s$ and the number of its subgroups of
order $k$ ($m,n,r,s,k\in \N, k\mid mnrs$). See Theorems \ref{Theorem_subgroups} and \ref{Theorem_subgroups_order_k}. As direct consequences,
in Corollaries \ref{Cor_subgroups} and \ref{Cor_subgroups_order_k} we derive formulas for
the number of subgroups of $p$-groups of rank four. We do not obtain the explicit form of the corresponding polynomials, but those
can be computed in special cases. In Section \ref{Sect_val} we present tables for such polynomials, computed using Corollaries
 \ref{Cor_subgroups} and \ref{Cor_subgroups_order_k}. We also formulate two conjectures.

For the proofs of our results we use some simple group theoretical and number theoretical arguments based on Goursat's lemma for groups
(Proposition \ref{prop_Goursat}). We remark that for every $m,n,r,s\in \N$,
\begin{equation} \label{multipl_N}
N(m,n,r,s) = \prod_{p\in \P} N(p^{\nu_p(m)},p^{\nu_p(n)}, p^{\nu_p(r)}, p^{\nu_p(s)}),
\end{equation}
\begin{equation*}
N(m,n,r,s;k) = \prod_{p\in \P} N(p^{\nu_p(m)},p^{\nu_p(n)}, p^{\nu_p(r)}, p^{\nu_p(s)}; p^{\nu_p(k)}),
\end{equation*}
which are direct consequences of the identities \eqref{prod_N} and \eqref{prod_N_k}. Here \eqref{multipl_N} shows that
$(m,n,r,s)\mapsto N(m,n,r,s)$ is a multiplicative arithmetic function of four variables. We refer to our survey paper
\cite{Tot2014} regarding this number theoretical concept.

In turns out that the function $n\mapsto N(n):=N(n,n,n,n)$ is multiplicative, that is
$N(n)= \prod_{p\in \P} N(p^{\nu_p(n)})$ for every $n\in \N$. A table of values for $N(n)$ is presented in Section \ref{Sect_val}.

\section{Preliminaries} \label{Prel}

Goursat's lemma for groups can be stated as follows. For its history, proof and
discussion see the references given in \cite{TotTatra2014}. See also the recent paper \cite{BSZ2015}.

\begin{prop}[Goursat's lemma for groups] \label{prop_Goursat}
Let $G$ and $H$ be arbitrary groups. Then there is a bijection
between the set $S$ of all subgroups of $G \times H$ and the set $T$
of all $5$-tuples $(A, B, C, D, \Psi)$, where $B\, \nr \, A \le G$,
$D\, \nr \, C\le H$ and $\Psi: A/B \to C/D$ is an isomorphism (here
$\le$ denotes subgroup and $\nr$ denotes normal subgroup). More
precisely, the subgroup corresponding to $(A, B, C, D, \Psi)$ is
\begin{equation*}
K= \{ (g,h)\in A\times C: \Psi(gB)=hD\}.
\end{equation*}
\end{prop}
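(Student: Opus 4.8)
The plan is to realize the stated correspondence explicitly, by writing down a map $\Phi\colon T\to S$ and a map $\Theta\colon S\to T$ and then verifying that $\Phi\circ\Theta=\id_S$ and $\Theta\circ\Phi=\id_T$. The map $\Phi$ is essentially given in the statement: starting from a tuple $(A,B,C,D,\Psi)\in T$ I would first check that
\[
K=\{(g,h)\in A\times C:\Psi(gB)=hD\}
\]
is a subgroup of $G\times H$. It contains $(e_G,e_H)$ since $\Psi(B)=D$; and since $B\nr A$, $D\nr C$ make $A/B$, $C/D$ into groups and $\Psi$ is a homomorphism, the implications $\Psi(g_iB)=h_iD\ (i=1,2)\Rightarrow\Psi(g_1g_2B)=h_1h_2D$ and $\Psi(gB)=hD\Rightarrow\Psi(g^{-1}B)=h^{-1}D$ give closure under the operation and under inversion. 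Set $\Phi(A,B,C,D,\Psi):=K$.

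For $\Theta$, given a subgroup $K\le G\times H$ I would put, with $p_G,p_H$ the two coordinate projections,
\[
A:=p_G(K),\qquad C:=p_H(K),\qquad B:=\{g\in G:(g,e_H)\in K\},\qquad D:=\{h\in H:(e_G,h)\in K\},
\]
and define $\Psi\colon A/B\to C/D$ by $\Psi(aB):=cD$ whenever $(a,c)\in K$. The verifications are all one-line manipulations inside $G\times H$: $B\le A$ is clear, and for $g\in B$, $a\in A$ with $(a,c)\in K$ one has $(a,c)(g,e_H)(a,c)^{-1}=(aga^{-1},e_H)\in K$, so $B\nr A$; symmetrically $D\nr C$. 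The key point, where most of the care goes, is the well-definedness of $\Psi$: if $(a,c),(a,c')\in K$ then $(e_G,c^{-1}c')\in K$, so $c^{-1}c'\in D$; and if $a'=ab$ with $b\in B$ and $(a,c)\in K$ then $(a',c)=(a,c)(b,e_H)\in K$. In the same vein $\Psi$ is a homomorphism, it is onto because every $c\in C=p_H(K)$ occurs in some $(a,c)\in K$, and it is one-to-one because $\Psi(aB)=D$ forces $(a,c)\in K$ with $c\in D$, whence $(a,e_H)=(a,c)(e_G,c)^{-1}\in K$ and $a\in B$. Thus $(A,B,C,D,\Psi)\in T$, and we set $\Theta(K):=(A,B,C,D,\Psi)$.

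Finally I would check that the two composites are the identity. For $\Phi\circ\Theta$: with $A,B,C,D,\Psi$ built from $K$ as above, the subgroup $\Phi(\Theta(K))=\{(a,c)\in A\times C:\Psi(aB)=cD\}$ contains $K$ by definition of $\Psi$, and conversely if $\Psi(aB)=cD$ then, choosing $(a,c_0)\in K$, we get $c_0D=cD$, hence $(e_G,c_0^{-1}c)\in K$ and $(a,c)=(a,c_0)(e_G,c_0^{-1}c)\in K$. For $\Theta\circ\Phi$: starting from $(A,B,C,D,\Psi)$ and forming $K$, surjectivity of $\Psi$ recovers $p_G(K)=A$ and $p_H(K)=C$; injectivity of $\Psi$ together with $\Psi(B)=D$ recovers $\{g:(g,e_H)\in K\}=B$ and $\{h:(e_G,h)\in K\}=D$; and the isomorphism reconstructed from $K$ sends $aB$ to $cD$ for $(a,c)\in K$, i.e.\ exactly to $\Psi(aB)$. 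The whole argument is elementary; the only genuine content is settling on the correct definitions of $B$, $D$ and $\Psi$, after which the main --- and quite mild --- obstacle is merely being careful about the well-definedness of $\Psi$ and about the recovery of $A=p_G(K)$ and $C=p_H(K)$.
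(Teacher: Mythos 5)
Your proof is correct and complete: the maps $\Phi$ and $\Theta$ are the standard ones, and you verify all the nontrivial points (well-definedness and bijectivity of $\Psi$, normality of $B$ and $D$, and both composites being the identity). Note that the paper itself does not prove this proposition --- it only states it and refers to the literature (the references in \cite{TotTatra2014} and the paper \cite{BSZ2015}) for a proof --- so there is no in-paper argument to compare against; what you have written is the usual textbook proof and would serve as a self-contained substitute for that citation.
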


We will need the following corollary.

\begin{cor} \label{cor_Goursat} Assume that $G$ and $H$ are finite groups and that the subgroup
$K$ of $G \times H$ corresponds to the $5$-tuple $(A_K, B_K, C_K, D_K, \Psi_K)$ under this bijection.
Then one has $|A_K|\cdot |D_K| =|K|=|B_K|\cdot |C_K|$.
\end{cor}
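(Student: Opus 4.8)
The plan is to read off the claim directly from the explicit description of the subgroup $K$ given in Goursat's lemma. Recall that
\[
K = \{(g,h) \in A_K \times C_K : \Psi_K(gB_K) = hD_K\}.
\]
First I would observe that the projection map $\pi_1 \colon K \to A_K$, $(g,h)\mapsto g$, is surjective: given $g\in A_K$, the coset $\Psi_K(gB_K)$ is some coset $hD_K$ of $D_K$ in $C_K$, and then $(g,h)\in K$. The fibre of $\pi_1$ over a fixed $g$ consists of all $(g,h)$ with $hD_K = \Psi_K(gB_K)$, i.e. $h$ ranges over a single coset of $D_K$, so the fibre has exactly $|D_K|$ elements. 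Hence $|K| = |A_K|\cdot|D_K|$.

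For the other equality I would argue symmetrically using the second projection $\pi_2\colon K \to C_K$, $(g,h)\mapsto h$. Since $\Psi_K$ is an isomorphism (in particular surjective), every coset $hD_K$ of $C_K$ is $\Psi_K(gB_K)$ for some $g$, so $\pi_2$ is onto $C_K$; and the fibre over a fixed $h$ is the set of $(g,h)$ with $gB_K = \Psi_K^{-1}(hD_K)$, a single coset of $B_K$, hence of size $|B_K|$. Therefore $|K| = |B_K|\cdot|C_K|$, and combining the two gives $|A_K|\cdot|D_K| = |K| = |B_K|\cdot|C_K|$, as stated.

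Alternatively, and even more briefly, one can avoid the fibre-counting by noting that the isomorphism $\Psi_K\colon A_K/B_K \to C_K/D_K$ forces $|A_K|/|B_K| = |C_K|/|D_K|$, i.e. $|A_K|\cdot|D_K| = |B_K|\cdot|C_K|$; it then remains only to identify this common value with $|K|$, which is exactly the surjectivity-plus-constant-fibre-size computation for $\pi_1$ above. I do not anticipate any real obstacle here: the only point requiring a small amount of care is checking that the projections are genuinely surjective (which uses that $\Psi_K$ is defined on all of $A_K/B_K$ and is onto), and that each nonempty fibre is a full coset rather than merely contained in one — both are immediate from the defining condition $\Psi_K(gB_K)=hD_K$ and the fact that $B_K\nr A_K$, $D_K \nr C_K$ so that $gB_K$ and $hD_K$ are honest cosets of subgroups.
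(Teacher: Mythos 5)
Your argument is correct and complete: the fibre-counting over the two projections gives $|K|=|A_K|\cdot|D_K|=|B_K|\cdot|C_K|$ exactly as claimed, and your observation that $\Psi_K$ being an isomorphism forces $|A_K|/|B_K|=|C_K|/|D_K|$ is the right shortcut for the outer equality. The paper states this corollary without proof, treating it as immediate from the explicit description of $K$ in Goursat's lemma, and your write-up supplies precisely the routine verification that is being left to the reader.
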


Proposition \ref{prop_Goursat} was used in our paper
\cite{TotTatra2014} to count the subgroups of the group $(\Z_m\times
\Z_n,+)$ ($m,n\in \N$). In fact, formulas \eqref{rank_two} and
\eqref{rank_two_order_k} are consequences of the next representation
result. For every $m,n\in \N$ let
\begin{equation} \label{def_J}
J_{m,n}:=\left\{(a,b,c,d,\ell)\in \N^5: a\mid m, b\mid a, c\mid n,
d\mid c, \frac{a}{b}=\frac{c}{d}, \ell \le \frac{a}{b}, \,
\gcd\left(\ell,\frac{a}{b} \right)=1\right\}.
\end{equation}

Note that here $\gcd(b,d)\cdot \lcm(a,c)=ad$ and $\gcd(b,d)\mid
\lcm(a,c)$.

For $(a,b,c,d,\ell)\in J_{m,n}$ define
\begin{equation*} \label{def_K}
K_{a,b,c,d,\ell}:= \left\{\left(i\frac{m}{a}, i\ell
\frac{n}{c}+j\frac{n}{d}\right): 0\le i\le a-1, 0\le j\le
d-1\right\}.
\end{equation*}

\begin{prop}[{\rm \cite[Th.\ 3.1]{TotTatra2014}}] \label{prop_repres} Let $m,n\in \N$.

i) The map $(a,b,c,d,\ell)\mapsto K_{a,b,c,d,\ell}$ is a bijection
between the set $J_{m,n}$ and the set of subgroups of $(\Z_m \times
\Z_n,+)$.

ii) The invariant factor decomposition of the subgroup
$K_{a,b,c,d,\ell}$ of order $ad$ is
\begin{equation*} \label{H_isom}
K_{a,b,c,d,\ell} \simeq \Z_{\gcd(b,d)} \times \Z_{\lcm(a,c)}.
\end{equation*}
\end{prop}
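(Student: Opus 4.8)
The plan is to apply Goursat's lemma (Proposition~\ref{prop_Goursat}) to $G=\Z_m$ and $H=\Z_n$ and to make every ingredient of the resulting $5$-tuple explicit. The subgroups of $\Z_m$ are exactly the cyclic groups $\langle m/a\rangle$ of order $a$, one for each $a\mid m$, and $\langle m/b\rangle\le\langle m/a\rangle$ precisely when $b\mid a$. So a Goursat $5$-tuple $(A,B,C,D,\Psi)$ for $\Z_m\times\Z_n$ has the form $A=\langle m/a\rangle$, $B=\langle m/b\rangle$ with $b\mid a\mid m$, and $C=\langle n/c\rangle$, $D=\langle n/d\rangle$ with $d\mid c\mid n$. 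Then $A/B$ is cyclic of order $a/b$ and $C/D$ is cyclic of order $c/d$, so an isomorphism $\Psi\colon A/B\to C/D$ exists iff $a/b=c/d$, and in that case it is uniquely determined by the image of the generator $(m/a)+B$, which must be a generator of $C/D$, hence equal to $\ell\bigl((n/c)+D\bigr)$ for some $\ell$ with $1\le\ell\le a/b$ and $\gcd(\ell,a/b)=1$. This sets up a bijection between the Goursat $5$-tuples for $\Z_m\times\Z_n$ and the set $J_{m,n}$ of \eqref{def_J}.

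Next I would identify, for $(a,b,c,d,\ell)\in J_{m,n}$, the subgroup of $\Z_m\times\Z_n$ that Goursat's correspondence attaches to it. Writing a general element of $A$ as $i\,(m/a)$ with $0\le i\le a-1$ and of $C$ as $k\,(n/c)$ with $0\le k\le c-1$, the defining relation $\Psi\bigl(i(m/a)+B\bigr)=k(n/c)+D$ reads $(k-i\ell)(n/c)\in\langle n/d\rangle$; since $n/d=(c/d)(n/c)$ and $n/c$ has order $c$, this is equivalent to $k\equiv i\ell\pmod{c/d}$, i.e.\ $k=i\ell+j(c/d)$ for some integer $j$, so that the second coordinate is $i\ell(n/c)+j(n/d)$ and the pairs $(i,j)$ with $0\le i\le a-1$, $0\le j\le d-1$ run over distinct elements. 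This is exactly $K_{a,b,c,d,\ell}$, which proves part~(i); in particular $|K_{a,b,c,d,\ell}|=ad$, as also follows from Corollary~\ref{cor_Goursat}.

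For part~(ii), put $K:=K_{a,b,c,d,\ell}$. Since $K\le A\times C\cong\Z_a\times\Z_c$, every element of $K$ has order dividing $\lcm(a,c)$, and since $\Z_m\times\Z_n$ is $2$-generated, so is $K$; hence $K\cong\Z_u\times\Z_v$ with $u\mid v$ and $uv=ad$. It therefore suffices to exhibit an element of $K$ of order exactly $\lcm(a,c)$, for then $v=\lcm(a,c)$ and $u=ad/\lcm(a,c)=\gcd(b,d)$ by the identity $\gcd(b,d)\lcm(a,c)=ad$ recorded after \eqref{def_J}. I would take the element $\bigl((m/a),\,\ell(n/c)+j(n/d)\bigr)=\bigl((m/a),\,(\ell+j(c/d))(n/c)\bigr)$ of $K$ and choose $j\in\{0,\dots,d-1\}$ so that $\ell+j(c/d)$ is coprime to $d$: for a prime $p$ dividing both $d$ and $c/d$ this is automatic because $\gcd(\ell,c/d)=\gcd(\ell,a/b)=1$, while for a prime $p\mid d$ with $p\nmid c/d$ the quantity $\ell+j(c/d)$ avoids $0$ modulo $p$ for all $j$ outside a single residue class, so the Chinese Remainder Theorem supplies a suitable $j$. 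For such $j$ one has $\gcd(\ell+j(c/d),c)=1$ as well, so the second coordinate has order $c$, the first has order $a$, and the element has order $\lcm(a,c)$, completing part~(ii).

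The step I expect to need the most care is the explicit translation in the second paragraph — turning Goursat's abstract gluing condition into the congruence $k\equiv i\ell\pmod{c/d}$ and checking that $(i,j)\mapsto\bigl(i(m/a),i\ell(n/c)+j(n/d)\bigr)$ is a well-defined bijection onto $K_{a,b,c,d,\ell}$ — together with the construction in the last paragraph of an element of maximal order, which is where the coprimality constraint $\gcd(\ell,a/b)=1$ built into $J_{m,n}$ enters in an essential way. Everything else is bookkeeping with the subgroup lattice of cyclic groups and the structure theorem for rank-two abelian groups.
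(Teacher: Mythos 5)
Your proof is correct. The paper does not reprove this proposition --- it imports it from \cite{TotTatra2014} --- but your derivation via Goursat's lemma (classifying the $5$-tuples $(A,B,C,D,\Psi)$ for $\Z_m\times\Z_n$, translating the gluing condition into $k\equiv i\ell \pmod{c/d}$, and then pinning down the invariant factors by exhibiting an element of order $\lcm(a,c)$) is exactly the approach of that cited source and of the rest of this paper, and all the details, including the coprime choice of $j$ in part~(ii), check out.
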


We need the following new result concerning the quotient group
$(\Z_m\times \Z_n)/K_{a,b,c,d,\ell}$. Its proof is included in Section \ref{Proofs}.

\begin{prop} \label{prop_quotient} Let $m,n\in \N$. For every subgroup $K_{a,b,c,d,\ell}$
defied above, we have the invariant factor decomposition
\begin{equation*} \label{isom}
(\Z_m\times \Z_n)/K_{a,b,c,d,\ell} \simeq \Z_{\gcd
\left(\frac{m}{a},\frac{n}{c}\right)} \times \Z_{\lcm
\left(\frac{m}{b},\frac{n}{d}\right)},
\end{equation*}
where $\gcd \left(\frac{m}{a},\frac{n}{c}\right) \mid \lcm
\left(\frac{m}{b},\frac{n}{d}\right)$.
\end{prop}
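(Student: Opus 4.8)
The plan is to compute the quotient group directly from the explicit description of $K_{a,b,c,d,\ell}$, rather than appealing to Goursat's lemma a second time. Recall from Proposition \ref{prop_repres} that
\[
K_{a,b,c,d,\ell}= \left\{\left(i\tfrac{m}{a}, i\ell \tfrac{n}{c}+j\tfrac{n}{d}\right): 0\le i\le a-1, 0\le j\le d-1\right\},
\]
so the first coordinates of elements of $K_{a,b,c,d,\ell}$ are exactly the multiples of $m/a$ in $\Z_m$. Hence the image of $K_{a,b,c,d,\ell}$ under the first projection $\pi_1:\Z_m\times\Z_n\to\Z_m$ is the subgroup $\tfrac{m}{a}\Z_m$ of order $a$, and the kernel of $\pi_1$ restricted to $K_{a,b,c,d,\ell}$ consists of the elements with $i\equiv 0 \pmod a$, i.e.\ $\{(0,j\tfrac{n}{d}): 0\le j\le d-1\}$, which is $\tfrac{n}{d}\Z_n$ of order $d$. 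This gives a short exact sequence description that I will dualize: the quotient $(\Z_m\times\Z_n)/K_{a,b,c,d,\ell}$ fits into
\[
0 \to \Z_n/\tfrac{n}{d}\Z_n \to (\Z_m\times\Z_n)/K_{a,b,c,d,\ell} \to \Z_m/\tfrac{m}{a}\Z_m \to 0,
\]
i.e.\ an extension of $\Z_{m/a}$ by $\Z_{d}$ (note $\Z_n/\tfrac{n}{d}\Z_n\simeq\Z_d$). By Corollary \ref{cor_Goursat} (or directly, $|K_{a,b,c,d,\ell}|=ad$), the quotient has order $\tfrac{m}{a}\cdot\tfrac{n}{d}\cdot\tfrac{ad}{d}\cdot\frac{1}{?}$ — more cleanly, $|(\Z_m\times\Z_n)/K_{a,b,c,d,\ell}| = \tfrac{mn}{ad}$.

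Next I would pin down which abelian group of order $mn/ad$ it is, using the rank-two classification: it suffices to identify the smallest invariant factor, equivalently the exponent or equivalently $N(\text{quotient})$. I would argue that the quotient is generated by the images of $(1,0)$ and $(0,1)$, and I would determine the exponent of each: the image of $(0,1)$ has order $n/d$ in the quotient (since $(0,j)\in K_{a,b,c,d,\ell}$ iff $d\mid j$ and more generally one must check when $(0,j)$ lands in $K_{a,b,c,d,\ell}$ — using that $(i\tfrac{m}{a},\cdot)=(0,\cdot)$ forces $a\mid i$), and similarly track the image of $(1,0)$, which interacts with the $\ell$-twist. The key structural point is the analogue of the relation noted after \eqref{def_J}: here one should verify $\gcd(\tfrac{m}{a},\tfrac{n}{c})\cdot\lcm(\tfrac{m}{b},\tfrac{n}{d}) = \tfrac{m}{a}\cdot\tfrac{n}{d} = \tfrac{mn}{ad}$, which matches the order, and the divisibility $\gcd(\tfrac{m}{a},\tfrac{n}{c})\mid\lcm(\tfrac{m}{b},\tfrac{n}{d})$, using $b\mid a$, $d\mid c$ and $\tfrac{a}{b}=\tfrac{c}{d}$. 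Granting the divisibility, $\Z_{\gcd(m/a,n/c)}\times\Z_{\lcm(m/b,n/d)}$ is indeed in invariant factor form and has the correct order, so I only need to show the quotient is isomorphic to it, e.g.\ by showing it has an element of order $\lcm(\tfrac{m}{b},\tfrac{n}{d})$ (hence cyclic part at least this large) and that its minimal number of generators is $\le 2$ with the complementary factor forced by the order count.

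Actually, the cleanest route — and the one I expect to use — is a duality/symmetry argument: apply Proposition \ref{prop_repres} to the "dual" situation. The subgroups of $\Z_m\times\Z_n$ of order $ad$ with quotient isomorphic to a prescribed group correspond, via Pontryagin duality $\widehat{\Z_m\times\Z_n}\simeq\Z_m\times\Z_n$, to subgroups of order $mn/ad$ of $\Z_m\times\Z_n$; and the annihilator $K_{a,b,c,d,\ell}^\perp$ is readily computed to be again of the form $K_{a',b',c',d',\ell'}$ with $a'=\tfrac{m}{b}$, $b'=\tfrac{m}{a}$, $c'=\tfrac{n}{d}$, $d'=\tfrac{n}{c}$ (and $\ell'$ determined by $\ell$ modulo $\tfrac{a}{b}=\tfrac{c}{d}$), since $(\Z_m\times\Z_n)/K\simeq \widehat{K^\perp}\simeq K^\perp$ for finite abelian groups. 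Then Proposition \ref{prop_repres}(ii) applied to $K_{a',b',c',d',\ell'}$ gives its invariant factor decomposition $\Z_{\gcd(b',d')}\times\Z_{\lcm(a',c')} = \Z_{\gcd(m/a,n/c)}\times\Z_{\lcm(m/b,n/d)}$, which is exactly the claim, and the divisibility $\gcd(b',d')\mid\lcm(a',c')$ is the one already recorded after \eqref{def_J} for the tuple $(a',b',c',d',\ell')$. The main obstacle is the bookkeeping in verifying that the annihilator of $K_{a,b,c,d,\ell}$ has exactly this form and that $(a',b',c',d',\ell')\in J_{m,n}$ — checking $b'\mid a'$, $d'\mid c'$, $\tfrac{a'}{b'}=\tfrac{c'}{d'}$, and $\gcd(\ell',\tfrac{a'}{b'})=1$ — together with fixing the pairing $\widehat{\Z_m\times\Z_n}\simeq\Z_m\times\Z_n$ explicitly enough to carry out the annihilator computation; everything after that is formal.
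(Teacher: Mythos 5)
Your proposal is correct, and the route you say you would actually take --- Pontryagin duality --- is genuinely different from the paper's. The paper follows (a cleaner version of) your first sketch: it shows the quotient is generated by $(0,1)+K$ and $(1,0)+K$, computes their orders to be $n/d$ and $m/b$ (the second computation is exactly the $\ell$-twisted linear congruence you flag), concludes that the exponent of the quotient is $\lcm(\frac{m}{b},\frac{n}{d})$, and then uses the fact that the quotient has rank $\le 2$ together with $|(\Z_m\times\Z_n)/K|=\frac{mn}{ad}$ to force the complementary invariant factor to be $\frac{mn}{ad\,\lcm(m/b,\,n/d)}=\gcd(\frac{m}{a},\frac{n}{c})$. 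Your duality argument also goes through: with the standard pairing one finds $K_{a,b,c,d,\ell}^{\perp}=K_{m/b,\,m/a,\,n/d,\,n/c,\,\ell'}$ with $\ell'\equiv -\ell^{-1} \pmod{a/b}$, this tuple does lie in $J_{m,n}$ (all four divisibilities and the ratio condition check out), and Proposition \ref{prop_repres}(ii) then hands you the invariant factors, while the divisibility $\gcd(\frac{m}{a},\frac{n}{c})\mid\lcm(\frac{m}{b},\frac{n}{d})$ comes for free from the remark following \eqref{def_J} applied to the primed tuple. What duality buys is that all structural work is outsourced to the already-proved Proposition \ref{prop_repres}(ii); what it costs is that pinning down $\ell'$ --- and hence verifying that the annihilator really has the claimed form rather than merely the right order and projections --- requires the same kind of congruence analysis modulo $e=a/b$ that the paper spends on the order of $(1,0)+K$, so the total effort is comparable. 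One small slip in your first sketch: $(0,t)\in K_{a,b,c,d,\ell}$ iff $\frac{n}{d}\mid t$ (not $d\mid t$), though your conclusion that the image of $(0,1)$ has order $\frac{n}{d}$ is right.
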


Let $F(m,n)$ denote the number of automorphisms of the group $\Z_m \times \Z_n$. For every $m,n\in \N$,
\begin{equation*}
F(m,n)=\prod_{p\in \P} F(p^{\nu_p(m)}, p^{\nu_p(n)}),
\end{equation*}
hence $(m,n)\mapsto F(m,n)$ is a multiplicative arithmetic
function of two variables. For prime powers $p^a, p^b$ one has, cf. \cite[Th.\
4.1]{Hil}, \cite[Cor.\ 3]{GG2008},
\begin{align*}
F(p^a,p^b)  =\begin{cases} p^{2a} \varphi(p^a) \varphi(p^b), \quad
0\le a<b, \\ p^a \varphi_2(p^a) \varphi(p^a), \quad 0\le a=b,
\end{cases}
\end{align*}
where $\varphi_2$ is the Jordan function of order $2$ given by $\varphi_2(n)=n^2\prod_{p\mid n} (1-\frac1{p^2})$. We deduce that
$F(1,p^b)=\varphi(p^b)$ for $b\in \N$ and
\begin{align*}
F(p^a,p^b)  =\begin{cases} p^{3a+b} \left(1-\frac1{p}\right)^2, \quad 1\le a<b, \\
p^{4a} \left(1-\frac1{p}\right)\left(1-\frac1{p^2}\right), \quad
1\le a=b.
\end{cases}
\end{align*}

\section{Main results}

We state our main results. Their proofs are included in Section \ref{Proofs}.

\begin{thm} \label{Theorem_subgroups} For every $m,n,r,s\in \N$ we have
\begin{align} \label{sum_4}
N(m,n,r,s)= \sum \varphi(x_3) \varphi(y_3) \varphi(z_3) \varphi(t_3)
F(u,v),
\end{align}
where the sum is over all
$(x_1,x_2,x_3,x_4,x_5,y_1,y_2,y_3,y_4,y_5,z_1,z_2,z_3,z_4,z_5,t_1,t_2,t_3,t_4,t_5)\in
\N^{20} $ satisfying the following $10$ conditions:

{\rm (1)}\ $x_1x_2x_3=m$,\quad

{\rm (2)}\ $x_3x_4x_5=n$,\quad

{\rm (3)}\ $y_1y_2y_3=\gcd(x_1,x_4)$, \quad

{\rm (4)}\ $y_3y_4y_5=x_3\lcm(x_1,x_4)$,

{\rm (5)}\ $z_1z_2z_3=r$, \quad

{\rm (6)}\ $z_3z_4z_5=s$, \quad

{\rm (7)}\ $t_1t_2t_3=\gcd(z_1,z_4)$, \quad

{\rm (8)}\ $t_3t_4t_5=z_3\lcm(z_1,z_4)$,

{\rm (9)}\ $\gcd(y_2,y_5)=\gcd(t_2,t_5)=:u$, \quad

{\rm (10)}\ $y_3\lcm(y_2,y_5)= t_3\lcm(t_2,t_5)=:v$.
\end{thm}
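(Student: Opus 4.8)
The plan is to write the group as $G\times H$ with $G=\Z_m\times\Z_n$ and $H=\Z_r\times\Z_s$ and to use Goursat's lemma (Proposition \ref{prop_Goursat}) twice: once for $G\times H$ at the top level, and once inside each of $G$ and $H$ to enumerate the relevant subgroup pairs. At the top level, Proposition \ref{prop_Goursat} identifies the subgroups of $G\times H$ with the $5$-tuples $(A,B,C,D,\Psi)$ with $B\nr A\le G$, $D\nr C\le H$ and $\Psi\colon A/B\to C/D$ an isomorphism. Grouping by the pairs $B\le A$ and $D\le C$, and using that the number of isomorphisms between two isomorphic finite groups equals the order of the automorphism group of either one, I would first reduce the problem to
\begin{equation*}
N(m,n,r,s)=\sum_{B\le A\le G}\ \sum_{\substack{D\le C\le H\\ C/D\,\simeq\,A/B}}\bigl|\operatorname{Aut}(A/B)\bigr|.
\end{equation*}
So everything comes down to enumerating the pairs $B\le A\le\Z_m\times\Z_n$ (and symmetrically the pairs $D\le C\le\Z_r\times\Z_s$) while keeping track of the isomorphism type of the quotient $A/B$.

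Next I would invoke the representation results of Section \ref{Prel}. By Proposition \ref{prop_repres}(i) the subgroups $A$ of $\Z_m\times\Z_n$ are exactly the $K_{a,b,c,d,\ell}$ with $(a,b,c,d,\ell)\in J_{m,n}$; the substitution $x_1=b$, $x_3=a/b=c/d$, $x_2=m/a$, $x_4=d$, $x_5=n/c$ rewrites $J_{m,n}$ as the set of tuples $(x_1,x_2,x_3,x_4,x_5,\ell)$ with $x_1x_2x_3=m$, $x_3x_4x_5=n$ and $\ell$ running over $\varphi(x_3)$ residues, while Proposition \ref{prop_repres}(ii) gives $A\simeq\Z_{M_1}\times\Z_{N_1}$ with $M_1:=\gcd(x_1,x_4)$, $N_1:=x_3\lcm(x_1,x_4)$ and $M_1\mid N_1$. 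Applying the same description to the subgroups $B$ of $A\simeq\Z_{M_1}\times\Z_{N_1}$, I get that they correspond to tuples $(y_1,\dots,y_5,\ell')$ with $y_1y_2y_3=M_1$ and $y_3y_4y_5=N_1$ (these are conditions (3)--(4)) together with $\varphi(y_3)$ values of $\ell'$, and Proposition \ref{prop_quotient}, read off with parameters $a'=y_1y_3$, $b'=y_1$, $c'=y_3y_4$, $d'=y_4$, yields
\begin{equation*}
A/B\ \simeq\ \Z_{\gcd(M_1/a',\,N_1/c')}\times\Z_{\lcm(M_1/b',\,N_1/d')}\ =\ \Z_{\gcd(y_2,y_5)}\times\Z_{y_3\lcm(y_2,y_5)},
\end{equation*}
since $M_1/a'=y_2$, $N_1/c'=y_5$, $M_1/b'=y_2y_3$ and $N_1/d'=y_3y_5$. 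Thus the pair $(A,B)$ carries the weight $\varphi(x_3)\varphi(y_3)$ and contributes the quotient isomorphism type $\Z_u\times\Z_v$ with $u=\gcd(y_2,y_5)\mid v=y_3\lcm(y_2,y_5)$. The identical analysis on $\Z_r\times\Z_s$ produces the variables $(z_i)$, $(t_i)$, conditions (5)--(8), the weight $\varphi(z_3)\varphi(t_3)$, and the quotient isomorphism type $\Z_{u'}\times\Z_{v'}$ of $C/D$ with $u'=\gcd(t_2,t_5)$ and $v'=t_3\lcm(t_2,t_5)$.

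Finally I would combine the two sides: since $u\mid v$ and $u'\mid v'$, comparing invariant factor decompositions shows that $A/B\simeq C/D$ if and only if $u=u'$ and $v=v'$, which are precisely conditions (9)--(10); and when they hold $\bigl|\operatorname{Aut}(A/B)\bigr|=\bigl|\operatorname{Aut}(\Z_u\times\Z_v)\bigr|=F(u,v)$. Substituting all of this into the double sum above gives the asserted formula. I expect the main obstacle to be not any single hard argument but the bookkeeping: verifying that the substitutions $(a,b,c,d,\ell)\leftrightarrow(x_1,\dots,x_5,\ell)$ (and the analogous ones for $(y_i)$, $(z_i)$, $(t_i)$) really are bijections onto the tuples satisfying conditions (1)--(2), (3)--(4), (5)--(6), (7)--(8), and getting the index computation for $A/B$ exactly right. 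One should also note that the inner enumeration of $B\le A$ may legitimately be performed after replacing the concrete subgroup $A$ by the abstract group $\Z_{M_1}\times\Z_{N_1}$, because both the number of subgroups $B$ yielding a given quotient type and that quotient type itself depend only on the isomorphism class of $A$.
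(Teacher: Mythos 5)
Your proposal is correct and follows essentially the same route as the paper: Goursat's lemma applied to $G=\Z_m\times\Z_n$, $H=\Z_r\times\Z_s$, the representation of subgroups via Proposition \ref{prop_repres}, the quotient description of Proposition \ref{prop_quotient}, and the same change of variables $(a,b,c,d)\mapsto(x_1,x_2,x_3,x_4,x_5)$ yielding conditions (1)--(10) with the weights $\varphi(x_3)\varphi(y_3)\varphi(z_3)\varphi(t_3)$ coming from the parameters $\ell$. Your closing remark about replacing the concrete subgroup $A$ by the abstract group $\Z_{M_1}\times\Z_{N_1}$ correctly addresses a point the paper passes over silently.
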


\begin{thm} \label{Theorem_subgroups_order_k} For every $k,m,n,r,s\in
\N$ such that $k\mid mnrs$, the number $N(m,n,r,s;k)$ is given by
the sum \eqref{sum_4} with the additional constraint

{\rm (11)} \ $x_1x_3x_4t_1t_3t_4=k$.
\end{thm}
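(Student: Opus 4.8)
The plan is to obtain Theorem~\ref{Theorem_subgroups_order_k} as a refinement of the proof of Theorem~\ref{Theorem_subgroups}, simply by tracking the order of each subgroup through the chain of Goursat bijections. Recall that $\Z_m\times\Z_n\times\Z_r\times\Z_s$ is viewed as $(\Z_m\times\Z_n)\times(\Z_r\times\Z_s)$. Applying Proposition~\ref{prop_Goursat} with $G=\Z_m\times\Z_n$ and $H=\Z_r\times\Z_s$, a subgroup $K$ corresponds to a $5$-tuple $(A,B,C,D,\Psi)$ with $B\nr A\le G$, $D\nr C\le H$ and $\Psi:A/B\xrightarrow{\sim}C/D$. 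By Corollary~\ref{cor_Goursat}, $|K|=|A|\cdot|D|=|B|\cdot|C|$.

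First I would recall how the twenty parameters arise. The subgroups $A,B$ of $\Z_m\times\Z_n$ are parametrized via Proposition~\ref{prop_repres}: the variables $x_1,\dots,x_5$ encode $A$ (with $A\simeq\Z_{\gcd(x_1,x_4)}\times\Z_{x_3\lcm(x_1,x_4)}$ of order $x_1x_3^2x_4\cdot(\text{something})$—more precisely $|A|$ is read off from the representation), and by Proposition~\ref{prop_quotient} the quotient $A/B$ has invariant factor decomposition $\Z_{y_?}\times\Z_{y_?}$ governed by $y_1,\dots,y_5$; analogously $z_1,\dots,z_5$ and $t_1,\dots,t_5$ encode $C,D$ in $\Z_r\times\Z_s$. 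The conditions (9)--(10) say exactly that $A/B\simeq C/D$, i.e. both quotients equal $\Z_u\times\Z_v$, and $F(u,v)$ counts the isomorphisms $\Psi$. The factors $\varphi(x_3)\varphi(y_3)\varphi(z_3)\varphi(t_3)$ count the $\ell$-parameters in the four applications of Proposition~\ref{prop_repres}.

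Next I would compute $|K|$ in terms of the parameters. From Corollary~\ref{cor_Goursat}, $|K|=|A|\cdot|D|$. Using Proposition~\ref{prop_repres}(ii), if $A=K_{x_1\cdots}$ then $|A|$ equals the product of the two invariant factors, namely $\gcd(x_1,x_4)\cdot x_3\lcm(x_1,x_4)=x_1 x_3 x_4$ (since $\gcd(a,b)\lcm(a,b)=ab$). Similarly $D$ is the subgroup of $\Z_r\times\Z_s$ of order $\gcd(t_1,t_4)\cdot t_3\lcm(t_1,t_4)=t_1t_3t_4$—here one must check that in the Goursat correspondence applied to $\Z_r\times\Z_s$ the subgroup playing the role of "$D$" (the smaller normal subgroup) is the one parametrized so that $t_1,\dots,t_5$ come from its quotient, while $z_1,\dots,z_5$ come from the ambient subgroup $C$; tracing the conventions in conditions (5)--(8) fixes this, giving $|C|=z_1z_3z_4$ and $|D|$ such that $|C|/|D|=|A/B|=|C/D|$. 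Hence $|K|=|A|\cdot|D|=x_1x_3x_4\cdot t_1t_3t_4$, which is precisely condition (11). Imposing $|K|=k$ therefore amounts to adjoining (11) to the system, and the number of subgroups of order $k$ is the same sum \eqref{sum_4} restricted to this extra equation.

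The main obstacle is bookkeeping: one must verify that the order attached to the tuple really is $x_1x_3x_4 t_1t_3t_4$ and not some other combination, which requires carefully matching the roles of the quintuples in Theorem~\ref{Theorem_subgroups} against the $(A,B,C,D)$ of Goursat's lemma and against the representation of Proposition~\ref{prop_repres} (in particular, remembering that $x_1,\dots,x_5$ describe $A$ while $y_1,\dots,y_5$ describe $A/B$, and using Corollary~\ref{cor_Goursat} to pass between $|A|\,|D|$ and $|B|\,|C|$). Once the identity $|K_{a,b,c,d,\ell}|=ad$ from Proposition~\ref{prop_repres}(ii) is applied twice—once to $A$ inside $\Z_m\times\Z_n$ and once to $D$ inside $\Z_r\times\Z_s$—and Corollary~\ref{cor_Goursat} is invoked, the claim follows with no further computation, since every other aspect of the sum is unchanged from Theorem~\ref{Theorem_subgroups}.
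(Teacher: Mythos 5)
Your proposal is correct and follows essentially the same route as the paper: invoke Corollary \ref{cor_Goursat} to get $|K|=|A|\cdot|D|$, then read off $|A|=x_1x_3x_4$ and $|D|=t_1t_3t_4$ from the parametrizations of Proposition \ref{prop_repres} (the stray ``$x_1x_3^2x_4$'' in your first paragraph is a slip that you correct yourself in the actual computation). The paper additionally remarks that the companion relation $k=|B|\cdot|C|$ is automatic and need not be imposed, which your argument also implicitly covers since Corollary \ref{cor_Goursat} gives both equalities at once.
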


\begin{cor} \label{Cor_subgroups} For every $a,b,c,d\in \N_0$ we have
\begin{align} \label{sum_4_primes}
N(p^a,p^b,p^c,p^d)= \sum \varphi(p^{x_3}) \varphi(p^{y_3}) \varphi(p^{z_3}) \varphi(p^{t_3})
F(p^u,p^v),
\end{align}
where the sum is over all
$(x_1,x_2,x_3,x_4,x_5,y_1,y_2,y_3,y_4,y_5,z_1,z_2,z_3,z_4,z_5,t_1,t_2,t_3,t_4,t_5)\in
\N_0^{20} $ satisfying the following $10$ conditions:

{\rm (i)}\ $x_1+x_2+x_3=a$,\quad

{\rm (ii)}\ $x_3+x_4+x_5=b$,\quad

{\rm (iii)}\ $y_1+y_2+y_3=\min(x_1,x_4)$, \quad

{\rm (iv)}\ $y_3+y_4+y_5=x_3+\max(x_1,x_4)$,

{\rm (v)}\ $z_1+z_2+z_3=c$, \quad

{\rm (vi)}\ $z_3+z_4+z_5=d$, \quad

{\rm (vii)}\ $t_1+t_2+t_3=\min(z_1,z_4)$, \quad

{\rm (viii)}\ $t_3+t_4+t_5=z_3+\max(z_1,z_4)$,

{\rm (ix)}\ $\min(y_2,y_5)=\min(t_2,t_5)=:u$, \quad

{\rm (x)}\ $y_3+\max(y_2,y_5)= t_3+ \max(t_2,t_5)=:v$.
\end{cor}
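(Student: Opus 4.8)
The plan is to derive Corollary~\ref{Cor_subgroups} directly from Theorem~\ref{Theorem_subgroups} by specializing $m=p^a$, $n=p^b$, $r=p^c$, $s=p^d$ and translating every multiplicative condition into its additive (exponent) counterpart. First I would observe that when $m,n,r,s$ are powers of a single prime $p$, every divisor appearing in the sum \eqref{sum_4} is itself a power of $p$, so each of the twenty variables $x_i,y_i,z_i,t_i$ can be written as $p^{x_i},p^{y_i},\dots$ with the exponents now ranging over $\N_0$ rather than $\N$ (the shift from $\N^{20}$ to $\N_0^{20}$ simply reflects that the trivial divisor $1$ corresponds to exponent $0$). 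Under this substitution a product of divisors becomes a sum of exponents, $\gcd$ becomes $\min$, and $\lcm$ becomes $\max$; hence conditions (1)--(10) of Theorem~\ref{Theorem_subgroups} turn termwise into conditions (i)--(x) of the Corollary. In particular $\gcd(x_1,x_4)=p^{\min(x_1,x_4)}$ gives (iii) from (3), $x_3\lcm(x_1,x_4)=p^{x_3+\max(x_1,x_4)}$ gives (iv) from (4), and the same pattern handles (vii), (viii), (ix), (x).

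Next I would deal with the summand. Since $\varphi$ is multiplicative and here all arguments are powers of $p$, the factor $\varphi(x_3)\varphi(y_3)\varphi(z_3)\varphi(t_3)$ in \eqref{sum_4} becomes exactly $\varphi(p^{x_3})\varphi(p^{y_3})\varphi(p^{z_3})\varphi(p^{t_3})$. The factor $F(u,v)$, where $u=\gcd(y_2,y_5)=\gcd(t_2,t_5)$ and $v=y_3\lcm(y_2,y_5)=t_3\lcm(t_2,t_5)$, similarly becomes $F(p^{u},p^{v})$ with $u=\min(y_2,y_5)=\min(t_2,t_5)$ and $v=y_3+\max(y_2,y_5)=t_3+\max(t_2,t_5)$, using that $F$ is a multiplicative arithmetic function of two variables (as recorded in Section~\ref{Prel}). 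Thus the general term of \eqref{sum_4} is precisely the general term of \eqref{sum_4_primes}, and the index sets are in the claimed bijection, which proves the Corollary.

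The only genuine point needing care — and the step I would single out as the main (though mild) obstacle — is the bookkeeping of the index-set translation: one must check that the correspondence $(\text{divisor tuple in }\N^{20}) \leftrightarrow (\text{exponent tuple in }\N_0^{20})$ is a genuine bijection compatible with \emph{all ten} constraints simultaneously, and in particular that the auxiliary equalities defining $u$ and $v$ in (9)--(10) translate consistently to (ix)--(x) without creating or destroying solutions. This is routine but must be stated cleanly: for a fixed prime $p$, a $20$-tuple of $p$-power divisors satisfies (1)--(10) if and only if the corresponding $20$-tuple of exponents satisfies (i)--(x), because each of the operations used (product, $\gcd$, $\lcm$, and the relation ``divides'') is intertwined with the corresponding operation on exponents (sum, $\min$, $\max$, and $\le$) by the isomorphism $d\mapsto \nu_p(d)$ between the monoid of $p$-power divisors and $(\N_0,+)$. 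I would close by noting that Corollary~\ref{Cor_subgroups_order_k} follows in the same way by additionally translating condition~(11) of Theorem~\ref{Theorem_subgroups_order_k}, namely $x_1x_3x_4t_1t_3t_4=k=p^e$, into $x_1+x_3+x_4+t_1+t_3+t_4=e$.
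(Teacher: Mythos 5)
Your proposal is correct and coincides with the paper's proof, which simply reads ``Apply Theorem~\ref{Theorem_subgroups} in the case $(m,n,r,s)=(p^a,p^b,p^c,p^d)$''; you have merely spelled out the routine exponent translation (product $\to$ sum, $\gcd\to\min$, $\lcm\to\max$) that the author leaves implicit.
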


\begin{cor} \label{Cor_subgroups_order_k} For every $a,b,c,d,k\in \N_0$
with $k\le a+b+c+d$ the number $N(p^a,p^b,p^c,p^d;p^k)$ is given by
the sum \eqref{sum_4_primes} with the additional constraint

{\rm (xi)} \ $x_1+x_3+x_4+t_1+t_3+t_4=k$.
\end{cor}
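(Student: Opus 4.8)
The plan is to derive Corollary~\ref{Cor_subgroups_order_k} as the prime-power specialization of Theorem~\ref{Theorem_subgroups_order_k}, exactly as Corollary~\ref{Cor_subgroups} is obtained from Theorem~\ref{Theorem_subgroups}. First I would set $m=p^a$, $n=p^b$, $r=p^c$, $s=p^d$ and observe that by the multiplicativity relation for $N(m,n,r,s;k)$ recorded after \eqref{multipl_N}, the factor of $N(p^a,p^b,p^c,p^d;p^k)$ coming from the prime $p$ is the only nontrivial one, so it suffices to analyze the sum \eqref{sum_4} with constraint (11) when all the parameters $m,n,r,s$ are powers of $p$.

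Next I would note that in the sum \eqref{sum_4} every variable $x_i,y_i,z_i,t_i$ that appears divides one of $m,n,r,s$ or a gcd/lcm built from them; hence when $m,n,r,s$ are powers of $p$, each of these twenty variables is itself a power of $p$, say $x_i=p^{x_i'}$ etc., with nonnegative integer exponents. Under this substitution the ten multiplicative conditions (1)--(10) turn into the ten additive conditions (i)--(x): products become sums of exponents, $\gcd$ becomes $\min$ of exponents, and $\lcm$ becomes $\max$ of exponents. Likewise the summand $\varphi(x_3)\varphi(y_3)\varphi(z_3)\varphi(t_3)F(u,v)$ becomes $\varphi(p^{x_3'})\varphi(p^{y_3'})\varphi(p^{z_3'})\varphi(p^{t_3'})F(p^{u'},p^{v'})$, and the extra constraint (11), $x_1x_3x_4t_1t_3t_4=k=p^k$, becomes the additive constraint (xi), $x_1'+x_3'+x_4'+t_1'+t_3'+t_4'=k$. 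Relabelling the primed exponents back to $x_i,\dots$ yields precisely the sum \eqref{sum_4_primes} over $\N_0^{20}$ with conditions (i)--(xi), which is the assertion.

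I would present this as a short formal argument: apply Theorem~\ref{Theorem_subgroups_order_k} with $(m,n,r,s,k)=(p^a,p^b,p^c,p^d,p^k)$; invoke the already-established Corollary~\ref{Cor_subgroups} for the translation of (1)--(10) and the main summand into (i)--(x) and $\varphi(p^{x_3})\cdots F(p^u,p^v)$; and then only the single new constraint (11) needs to be rewritten, which is immediate since a product of prime powers equals $p^k$ iff the corresponding exponents sum to $k$.

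The only point requiring any care — and thus the main (mild) obstacle — is the bookkeeping that every one of the twenty summation variables genuinely ranges over powers of $p$ once $m,n,r,s$ are powers of $p$: one must check this along the chain of defining equations (2), (4), (8), (10), etc., so that no variable is forced to a non-prime-power value and the index set of the sum is exactly $\N_0^{20}$ with the stated constraints. Since this verification is already implicit in the passage from Theorem~\ref{Theorem_subgroups} to Corollary~\ref{Cor_subgroups}, for Corollary~\ref{Cor_subgroups_order_k} it reduces to adding the single observation about constraint~(11), and the proof is essentially one paragraph long.
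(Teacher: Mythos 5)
Your proposal is correct and follows exactly the paper's route: the paper's proof is the one-line specialization of Theorem~\ref{Theorem_subgroups_order_k} to $(m,n,r,s)=(p^a,p^b,p^c,p^d)$ and $k:=p^k$, with the multiplicative conditions becoming additive conditions on exponents. Your additional bookkeeping about each summation variable being a power of $p$ is a reasonable elaboration of what the paper leaves implicit.
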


\section{Proofs} \label{Proofs}

\begin{proof}[Proof of Proposition {\rm \ref{prop_quotient}}] Let $K:= K_{a,b,c,d,\ell}$. The group $\Z_m\times \Z_n$ is
generated by $(0,1)$ and $(1,0)$. Therefore, the quotient group
$\Z_m\times \Z_n/K$ is generated by $(0,1)+K$ and $(1,0)+K$.

First we show that the order of $(0,1)+K$ is $\frac{n}{d}$. Indeed, this
follows from the following properties:

$\bullet$ $t(0,1)\in K$ if and only if there is $(i,j)\in \{0,1,\ldots, a-1\}
\times \{0,1,\ldots,d-1\}$ such that $(0,t)=(i\frac{m}{a}, i\ell
\frac{n}{c} +j\frac{n}{d})$, where the last condition is equivalent, in turn, to $i=0$ and
$t=j\frac{n}{d}$;

$\bullet$ the least such $t\in \N$ is $\frac{n}{d}$.

Next we show that the order of $(1,0)+K$ is $\frac{m}{b}$. Indeed,
observe that

$\bullet$ $u(1,0)\in K$ if and only if there is $(i,j)\in \{0,1,\ldots, a-1\}
\times \{0,1,\ldots,d-1\}$ such that $(u,0)=(i\frac{m}{a}, i\ell
\frac{n}{c} +j\frac{n}{d})$;

$\bullet$ therefore, $u=i\frac{m}{a}$ and $i\ell
\frac{n}{c}+j\frac{n}{d} \equiv 0$ (mod $n$), that is
$\frac{n}{c}(i\ell +j\frac{c}{d}) \equiv 0$ (mod $n$), $i\ell
+j\frac{c}{d} \equiv 0$ (mod $c$), the latter is a linear congruence
in $j$ and it turns out that for a fixed $i$ it has solution in $j$
if $\gcd\left(\frac{c}{d},c\right)=\frac{c}{d} \mid i\ell$, that is
$\frac{c}{d }\mid i$, since $\gcd\left(\ell,\frac{c}{d}\right)=1$,
cf. \eqref{def_J};

$\bullet$ the least such $i\in \N$ is $\frac{c}{d}$;

$\bullet$ finally, $u=\frac{c}{d}\cdot \frac{m}{a}=\frac{a}{b}\cdot
\frac{m}{a}= \frac{m}{b}$, representing the order of $(1,0)+K$.

We deduce that the exponent of $(\Z_m \times \Z_n)/K$ is
$\lcm(\frac{m}{b}, \frac{n}{d})$.

Now, $\Z_m \times \Z_n \simeq \Z_{\gcd(m,n)} \times \Z_{\lcm(m,n)}$
is a group of rank $\le 2$. Therefore, the quotient group $(\Z_m
\times \Z_n)/K$ has also rank $\le 2$. That is, $(\Z_m \times
\Z_n)/K \simeq \Z_v \times \Z_w$ for unique $v$ and $w$, where
$v\mid w$ and $vw=\frac{mn}{ad}$. We obtain that the exponent of
$(\Z_m \times \Z_n)/K$ is $\lcm(\frac{m}{b}, \frac{n}{d})=w$. Hence
\begin{equation*}
v=\frac{mn}{adw}= \frac{mn}{ad\lcm (\frac{m}{b}, \frac{n}{d})}=
\frac{mn}{ad \lcm (\frac{mn}{bn}, \frac{mn}{dm})} = \frac{mn}{ad
\frac{mn}{\gcd(bn, dm)}}
\end{equation*}
\begin{equation*}
= \frac{\gcd(bn,dm)}{ad}= \gcd\left( \frac{n}{d}\cdot
\frac{b}{a},\frac{m}{a}\right) = \gcd\left( \frac{n}{d}\cdot
\frac{d}{c},\frac{m}{a}\right) = \gcd\left( \frac{m}{a},
\frac{n}{c}\right).
\end{equation*}

This completes the proof.
\end{proof}

\begin{proof}[Proof of Theorem {\rm \ref{Theorem_subgroups}}]
We apply Goursat's lemma (Proposition \ref{prop_Goursat}) for the groups $G=\Z_m \times \Z_n$ and
$H=\Z_r \times \Z_s$. Let
\begin{equation*}
A=K_{a,b,c,d,\ell} \le \Z_m \times \Z_n,
\end{equation*}
where $a\mid m, b\mid a, c\mid n, d\mid c,
\frac{a}{b}=\frac{c}{d}=e, 1\le \ell \le e, \gcd \left(\ell,e
\right)=1$. Here, by Proposition \ref{prop_repres},
\begin{equation*}
A \simeq \Z_{m_1} \times \Z_{n_1},
\end{equation*}
where $m_1=\gcd(b,d)$, $n_1=\lcm(a,c)$. Also, let
\begin{equation*}
B \simeq K_{a_1,b_1,c_1,d_1,\ell_1} \le A,
\end{equation*}
where $a_1\mid m_1, b_1\mid a_1, c_1\mid n_1, d_1\mid c_1,
\frac{a_1}{b_1}=\frac{c_1}{d_1}=e_1, 1 \le \ell_1 \le e_1, \gcd
\left(\ell_1,e_1 \right)=1$. We have, by using Proposition
\ref{prop_quotient},
\begin{equation} \label{A/B}
A/B \simeq \Z_u \times \Z_v,
\end{equation}
where $u=\gcd(m_1/a_1,n_1/c_1)$, $v=\lcm(m_1/b_1,n_1/d_1)$.

In a similar way, let
\begin{equation*}
C=K_{\alpha,\beta,\gamma,\delta,\lambda} \le \Z_r \times \Z_s,
\end{equation*}
where $\alpha \mid r, \beta \mid \alpha, \gamma \mid s, \delta \mid
\gamma, \frac{\alpha}{\beta}=\frac{\gamma}{\delta}=\epsilon, 1\le
\lambda \le \epsilon, \gcd \left(\lambda,\epsilon \right)=1$. Here,
\begin{equation*}
C \simeq \Z_{r_1} \times \Z_{s_1},
\end{equation*}
where $r_1=\gcd(\beta,\delta)$, $s_1=\lcm(\alpha,\gamma)$.
Furthermore, let
\begin{equation*}
D \simeq K_{\alpha_1,\beta_1,\gamma_1,\delta_1,\lambda_1} \le C,
\end{equation*}
with $\alpha_1\mid r_1, \beta_1\mid \alpha_1, \gamma_1\mid s_1,
\delta_1\mid \gamma_1,
\frac{\alpha_1}{\beta_1}=\frac{\gamma_1}{\delta_1}=\epsilon_1, 1 \le
\lambda_1 \le \epsilon_1, \gcd \left(\lambda_1,\epsilon_1
\right)=1$. Here,
\begin{equation} \label{C/D}
C/D \simeq \Z_{\eta} \times \Z_{\theta},
\end{equation}
where $\eta= \gcd(r_1/\alpha_1,s_1/\gamma_1)$,
$\theta=\lcm(r_1/\beta_1,s_1/\delta_1)$.

Assume that the quotient groups $A/B$ and $C/D$ are isomorphic. According to \eqref{A/B} and \eqref{C/D},
this holds if and only if $u=\eta$ and $v=\theta$. Then there are $F(u,v)$ isomorphisms $A/B \simeq C/D$.

It follows that
\begin{equation*}
N(m,n,r,s)=\sum F(u,v),
\end{equation*}
where the sum is over all $(a,b,c,d,\ell,a_1,b_1,c_1,d_1,\ell_1,
\alpha,\beta,\gamma,\delta,\lambda,\alpha_1,\beta_1,\gamma_1,\delta_1,\lambda_1)\in
\N^{20}$ satisfying the above properties.

More exactly, let $m=ax, a=by$, $n=cz, c=dt$, where $y=t=e$. Hence $m=bxe$,
$n=dze$. Also, $m_1=b_1x_1e_1$, $n_1=d_1z_1e_1$, where
$m_1=\gcd(b,d)$, $n_1=\lcm(a,c)= e\lcm(b,d)$. Similarly, let $r=\alpha X, \alpha=\beta Y$, $s=\gamma Z,
\gamma=\delta T$, where $Y=T=\epsilon$. Hence $r=\beta X\epsilon$, $s=\delta
Z\epsilon$. Also, $r_1=\beta_1X_1\epsilon_1$,
$s_1=\delta_1Z_1\epsilon_1$, where $r_1=\gcd(\beta,\delta)$,
$r_1=\lcm(\alpha,\gamma)= \epsilon \lcm(\beta,\delta)$.

We deduce that
\begin{equation*}
N(m,n,r,s)=\sum F(u,v),
\end{equation*}
where the sum is over all $(b,x,e,d,z,\ell, b_1,x_1,e_1,d_1,z_1,\ell_1, \beta,X,\epsilon,\delta,Z,\lambda, \beta_1,X_1,\epsilon_1,
\delta_1,Z_1,\lambda_1)\in \N^{24}$ satisfying the following conditions:

{\rm (1')}\ $bxe=m$,\quad

{\rm (2')}\ $dze=n$,\quad

{\rm (3')}\ $b_1x_1e_1=\gcd(b,d)$, \quad

{\rm (4')}\ $d_1z_1e_1= e \lcm(b,d)$,

{\rm (4'')}\ $1\le \ell \le e, \gcd(\ell,e)=1$,

{\rm (5')}\ $\beta X \epsilon=r$, \quad

{\rm (6')}\ $\delta Z\epsilon=s$, \quad

{\rm (7')}\ $\beta_1X_1\epsilon_1=\gcd(\beta,\delta)$, \quad

{\rm (8')}\ $\delta_1Z_1\epsilon_1=\epsilon\lcm(\beta,\delta)$,

{\rm (8'')}\ $1\le \ell_1 \le e_1, \gcd(\ell_1,e_1)=1$,

{\rm (9')}\ $\gcd(x_1,z_1)=\gcd(X_1,Z_1)=u$, \quad

{\rm (10')}\ $e_1\lcm(x_1,z_1)= \epsilon_1\lcm(X_1,Z_1)=v$,

{\rm (10'')}\ $1\le \lambda \le \epsilon, \gcd(\lambda,\epsilon)=1$,

{\rm (10''')}\ $1\le \lambda_1 \le \epsilon_1, \gcd(\lambda_1,\epsilon_1)=1$.

Note that, according to (4''), (8''), (10'') and (10'''), the variables $\ell$, $\ell_1$, $\lambda$ and $\lambda_1$ take $\varphi(e)$, $\varphi(e_1)$,
$\varphi(\epsilon)$, respectively $\varphi(\epsilon_1)$ distinct values, independently from the other variables.

Now formula \eqref{sum_4} follows by introducing the notation:

$x_1:=b$, $x_2:=x$, $x_3:=e$, $x_4:=d$, $x_5:=z$,
$y_1:=b_1$, $y_2:=x_1$, $y_3:=e_1$, $y_4:=d_1$, $y_5:=z_1$
$z_1:=\beta$, $z_2:=X$, $z_3:=\epsilon$, $z_4:=\delta$, $z_5:=Z$,
$t_1:=\beta_1$, $t_2:=X_1$, $t_3:=\epsilon_1$, $t_4:=\delta_1$, $t_5:=Z_1$.
\end{proof}

\begin{proof}[Proof of Theorem {\rm \ref{Theorem_subgroups_order_k}}]
According to Corollary \ref{cor_Goursat}, $k=|A||D|=|B||C|$. Here $|A|=ad=bde$, $|B|=a_1d_1=b_1d_1e_1$. Similarly,
$|C|=\alpha \delta = \beta \delta \epsilon$, $|D|=\alpha_1 \delta_1= \beta_1 \delta_1 \epsilon_1$.
Hence condition $k=|A||D|=|B||C|$ is equivalent to $bde\beta_1 \delta_1 \epsilon_1=k$ and to $x_1x_3x_4t_1t_3t_4=k$ with the changing of
notation of above. Note that relation $k=|B||C|$ is a consequence of the other constraints and needs not to be considered.
\end{proof}

\begin{proof}[Proof of Corollary {\rm \ref{Cor_subgroups}}]
Apply Theorem \ref{Theorem_subgroups} in the case $(m,n,r,s)=(p^a,p^b,p^c,p^d)$.
\end{proof}

\begin{proof}[Proof of Corollary {\rm \ref{Cor_subgroups_order_k}}]
Apply Theorem \ref{Theorem_subgroups_order_k} in the case $(m,n,r,s)=(p^a,p^b,p^c,p^d)$, $k:=p^k$.
\end{proof}

\section{Tables and conjectures} \label{Sect_val}

\subsection{Values of $N(p^a,p^b,p^c,p^d)$ and $N(p^a,p^b,p^c,p^d;p^k)$}

Consider the polynomials $N(p^a,p^b,p^c,p^d)$ and $N(p^a,p^b,p^c,p^d;p^k)$, where $p$ is prime, $1\le a\le b \le c \le d$,
$0\le k \le a+b+c+d=:n$. It is known that $N(p^a,p^b,p^c,p^d;p^k)= N(p^a,p^b,p^c,p^d;p^{n-k})$ for every
$0\le k\le n$. The following unimodality result is also known: $N(p^a,p^b,p^c,p^d;p^k)-N(p^a,p^b,p^c,p^d;p^{k-1})$ has
nonnegative coefficients for every $1\le k \le \lfloor n/2 \rfloor$. More generally, similar properties hold for every abelian $p$-group.
See Butler \cite{But1987}, Takegahara \cite{Tak1998}.

In what follows we give the polynomials representing the values of $N(p^a,p^b,p^c,p^d)$ and $N(p^a,p^b,p^c,p^d;p^k)$,
where $1\le a\le b \le c \le d\le 3$, also $a=b=c=d=4$, with $0\le k\le \lfloor n/2 \rfloor$, computed using Corollaries
\ref{Cor_subgroups} and \ref{Cor_subgroups_order_k}.

We also formulate the following conjectures, confirmed by these examples.

\begin{conj} For every $1\le a\le b \le c \le d$, the degree of the polynomial $N(p^a,p^b,p^c,p^d)$ is $2a+b+c$.
\end{conj}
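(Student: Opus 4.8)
The plan is to extract the degree of $N(p^a,p^b,p^c,p^d)$ directly from the parametrization in Corollary \ref{Cor_subgroups}. Each lattice point contributes a monomial $p^{x_3+y_3+z_3+t_3}F(p^u,p^v)$; since $F(p^u,p^v)$ has degree $3u+v$ if $u<v$, degree $4u$ if $u=v$, and degree $v$ if $u=0$, the total degree of the polynomial is
\begin{equation*}
\deg N(p^a,p^b,p^c,p^d) = \max\bigl( x_3+y_3+z_3+t_3 + \deg F(p^u,p^v) \bigr),
\end{equation*}
the maximum taken over all $20$-tuples satisfying conditions (i)--(x). So the whole problem reduces to a finite linear-programming-type optimization over the $\N_0$-solutions of an explicit system of linear equations and $\min/\max$ constraints, and I would solve it by exhibiting an optimal tuple together with a matching upper bound.

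First I would guess the extremal configuration. The exponents $x_1,\dots,x_5$ encode a subgroup $A$ of $\Z_{p^a}\times \Z_{p^b}$, and to make $u,v$ (and hence $F$) large one wants $A$ and the further data $B,C,D$ chosen so that the relevant quotient $\Z_{p^u}\times \Z_{p^v}$ is as big as possible. Concretely, recall from the proof of Theorem \ref{Theorem_subgroups} that $y_2,y_5$ are the invariant-factor exponents of a subgroup of $A\simeq \Z_{p^{\min(x_1,x_4)}}\times \Z_{p^{x_3+\max(x_1,x_4)}}$-type data, and $v=y_3+\max(y_2,y_5)$, $u=\min(y_2,y_5)$, with the same equalities forced on the $t$-side. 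The natural candidate for the maximum of $x_3+y_3+z_3+t_3+\deg F$ should come from taking $x_3$ as large as the system allows while keeping the $y$- and $t$-blocks "balanced" so that $u=v$ (which gives the larger exponent $4u$ in $F$ when $u$ is forced small, but one must check whether pushing $u<v$ with a large $v$ beats it). I expect the optimum to be $x_3$ maximal subject to (i),(ii), i.e. essentially $x_3=a$ (using $a\le b$), together with a choice on the $z,t$-side that matches the constraint (ix)--(x); tracking through, the arithmetic should collapse to $2a+b+c$, with the $d$-variables contributing nothing extra because the $r,s$-block is tied to the $m,n$-block only through the common values $u$ and $v$, which are capped by the $m,n$-side.

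The key steps, in order: (1) write $\deg F(p^u,p^v)$ as the piecewise-linear function above and observe $3u+v \le 4\max(u,v) \le v + 3v$, so in all cases $\deg F \le \min(u,v) + 3\max(u,v)$, i.e.\ $\deg F \le u+3v$ when $u\le v$ — reducing to a single linear expression to maximize; (2) using (ix)--(x), note $u = \min(y_2,y_5)=\min(t_2,t_5)$ and $v-y_3=\max(y_2,y_5)$, $v-t_3=\max(t_2,t_5)$, so $u+v = y_2+y_5+y_3 \le y_1+y_2+y_3+y_4+y_5 = \min(x_1,x_4)+x_3+\max(x_1,x_4) = x_1+x_3+x_4 = x_1+x_3+x_4$, and similarly $u+v\le z_1+z_3+z_4$; (3) bound the objective $x_3+y_3+z_3+t_3+u+3v$: write it as $(x_3+y_3+u+v) + (z_3+t_3) + 2v$ and bound each piece using (i)--(x) and $a\le b\le c\le d$; (4) from (iv), $v\le y_3+\max(y_2,y_5)\le y_3+y_4+y_5 = x_3+\max(x_1,x_4)\le a + \max(x_1,x_4)$, and since $x_1\le a$, $x_4\le b$, one gets $v\le a+b$ with a parallel bound $v\le c+d$ from the $z$-side, hence $v\le \min(a+b,c+d)=a+b$; combining these linear inequalities should yield $x_3+y_3+z_3+t_3+\deg F \le 2a+b+c$; (5) finally exhibit an explicit tuple attaining equality — I anticipate $x_3=a$, $x_1=x_4=0$ forcing $y_1=y_2=y_4=y_5=0$, $y_3=a$ hence $u=0,v=a$... which seems to give only degree $a+\deg F(1,p^a)=a + a = 2a$, too small, so the true extremizer is subtler and must keep $x_1,x_4$ positive to feed $\max(x_1,x_4)$ into $v$; the correct choice is likely $x_3=a$, $x_4=b$ (its max), $x_1$ chosen so $\min(x_1,x_4)$ is still usable, giving $v$ around $a+b$, paired on the $z$-side with $z_3,z_4$ tuned so the common value $v$ is achievable given $c\le d$, and then $u$ as large as both sides permit.

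\textbf{Main obstacle.} The hard part will be step (5): identifying the genuinely optimal lattice point, because the objective mixes $x_3$ (which wants to be large, eating into $x_1+x_4$) against $\max(x_1,x_4)$ inside $v$ (which also wants $x_1+x_4$ large), and simultaneously the $z,t$-block must independently realize the \emph{same} pair $(u,v)$ — so feasibility on the $r,s=p^c,p^d$ side imposes the binding constraint $v\le c+d$ etc., and one must verify that $2a+b+c$ (not $2a+b+d$ or $a+b+c+d$) is exactly what survives. I would settle this by a careful case analysis on which of $u<v$ or $u=v$ is optimal and on the order statistics of the $x$'s and $z$'s, checking the answer against the tabulated polynomials for $1\le a\le b\le c\le d\le 3$ as a consistency check. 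The upper-bound direction (steps (1)--(4)) should be routine once the right combination of the linear constraints is written down.
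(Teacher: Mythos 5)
The statement you are trying to prove is one of the two \emph{conjectures} of the paper: the author offers no proof, only verification against the tables of Section \ref{Sect_val}. So there is no argument of the paper to compare against, and the real question is whether your proposal closes the conjecture. It does not: what you have written is a strategy (reduce to maximizing $x_3+y_3+z_3+t_3+\deg F(p^u,p^v)$ over the lattice points of Corollary \ref{Cor_subgroups}), and the strategy is sound in outline — since every summand $\varphi(p^{x_3})\varphi(p^{y_3})\varphi(p^{z_3})\varphi(p^{t_3})F(p^u,p^v)$ is monic, the degree of $N$ really is the maximum of $x_3+y_3+z_3+t_3+\deg F$ (a point you should state, since individual summands have negative lower-order coefficients). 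But you explicitly leave open both halves of the optimization: you have no matching upper bound and, as you yourself observe, your candidate extremal tuple gives only $2a$. A plan whose decisive steps are marked "to be settled by careful case analysis" is not a proof of an open conjecture.

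Beyond incompleteness, step (1) contains a concrete error that would sink the upper-bound half even if carried out. Since $u=\min(y_2,y_5)\le y_3+\max(y_2,y_5)=v$ always holds, one has $\deg F(p^u,p^v)=3u+v$ \emph{exactly} in every case ($u=0$, $0<u<v$, and $u=v$ all give $3u+v$); replacing this by the weaker bound $u+3v$ throws away exactly the information that makes the conjectured degree $2a+b+c$ rather than something larger. For instance, at $(a,b,c,d)=(1,2,2,2)$ the feasible point with $x_1=1,x_4=2,x_3=y_3=0,u=1,v=2$ and $z_1=z_4=2, z_3=0, t_3=1$ has true summand degree $1+\deg F(p,p^2)=1+5=6=2a+b+c$, but your relaxed objective evaluates to $1+(u+3v)=8$; so the linear program you propose to solve in steps (2)--(4) has optimum strictly larger than $2a+b+c$ and cannot yield the bound. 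Relatedly, your chain of inequalities has a slip ($y_1+\cdots+y_5=x_1+x_3+x_4-y_3$, not $x_1+x_3+x_4$) and, even corrected, the combination you describe only gives $a+b+c+d$: the whole difficulty is that the pair $(u,v)$ must be realized \emph{simultaneously} by the $(x,y)$-block and the $(z,t)$-block, and extracting $2a+b+c$ (with the smallest parameters $a,a,b,c$ and not $d$ appearing) requires exploiting that coupling much more delicately than any of the inequalities you list. The problem remains open as far as your proposal goes.
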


\begin{conj} For every $1\le m$ the degree of the polynomial $N(p^m,p^m,p^m,p^m)$ is $4m$ and its leading coefficient is $1$.
\end{conj}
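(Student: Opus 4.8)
\emph{Approach.} The plan is to bypass the explicit sum of Corollary~\ref{Cor_subgroups} entirely and to split $N(p^m,p^m,p^m,p^m)$ according to the isomorphism type of the subgroup. Write $G=(\Z_{p^m})^4$, which is the finite abelian $p$-group of type $\lambda=(m,m,m,m)$, and for a partition $\mu$ let $a_\lambda(\mu;p)$ be the number of subgroups of $G$ isomorphic to $\bigoplus_i \Z_{p^{\mu_i}}$. Since each subgroup has a unique type,
\begin{equation*}
N(p^m,p^m,p^m,p^m)=\sum_{\mu\subseteq(m,m,m,m)} a_\lambda(\mu;p),
\end{equation*}
where $\mu$ runs over the partitions with at most four parts, each of size at most $m$ (and $a_\lambda(\mu;p)=0$ for all other $\mu$). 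I would then invoke the classical closed form for the number of subgroups of a given type in a finite abelian $p$-group --- see Butler \cite{But1994} and the further references \cite{Bho1996,Cal2004,Sho1972,Ste1992} --- which, with $\mu'$ the conjugate partition and $\binom{x}{y}_p$ the Gaussian binomial coefficient, reads
\begin{equation*}
a_\lambda(\mu;p)=\prod_{i\ge1} p^{\,\mu'_{i+1}(\lambda'_i-\mu'_i)}\binom{\lambda'_i-\mu'_{i+1}}{\mu'_i-\mu'_{i+1}}_p .
\end{equation*}

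Next I would read off the degree and leading coefficient in $p$. Each Gaussian binomial $\binom{x}{y}_p$ is a polynomial in $p$ with positive coefficients, monic, of degree $y(x-y)$; hence the $i$-th factor above is monic of degree $\mu'_{i+1}(\lambda'_i-\mu'_i)+(\mu'_i-\mu'_{i+1})(\lambda'_i-\mu'_i)=\mu'_i(\lambda'_i-\mu'_i)$. As all coefficients are positive, no cancellation occurs on multiplying, so $a_\lambda(\mu;p)$ is monic of degree $\sum_{i\ge1}\mu'_i(\lambda'_i-\mu'_i)$. (As a check: for $\lambda=(b,a)$ with $a\le b$ this yields degree $a$ and leading coefficient $b-a+1$, matching \eqref{pol_rank_two}; for $\lambda=(d,c,b,a)$ the same maximisation gives degree $2a+b+c$, the value predicted by the first conjecture above.)

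Finally I would specialise to $\lambda=(m,m,m,m)$, so that $\lambda'_i=4$ for $1\le i\le m$ and $\lambda'_i=0$ otherwise, giving for any admissible $\mu$ (which has $\mu'_i\in\{0,1,2,3,4\}$ and $\mu'_i=0$ for $i>m$)
\begin{equation*}
\deg_p a_\lambda(\mu;p)=\sum_{i=1}^{m}\mu'_i(4-\mu'_i)\le 4m,
\end{equation*}
since $t(4-t)\le 4$ for all integers $t$, with equality exactly when $t=2$. Equality throughout therefore forces $\mu'_i=2$ for $1\le i\le m$, i.e.\ $\mu=(m,m)$, a genuine subgroup type of $G$. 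Thus $\mu=(m,m)$ is the \emph{unique} partition contributing a term of top degree $4m$, that term is monic, and every other term has strictly smaller degree; hence $N(p^m,p^m,p^m,p^m)$ is a polynomial in $p$ of degree $4m$ with leading coefficient $1$.

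I do not expect a deep obstacle: the argument reduces to the known type-counting formula together with the elementary fact that $t\mapsto t(4-t)$ has a \emph{unique} integer maximiser --- and it is precisely this uniqueness, special to the equal-exponent case, that upgrades a statement about the degree to one about the leading coefficient. The only genuine care needed is bookkeeping: pinning down a correct normalisation of the Gaussian-binomial formula (conventions in the literature differ by $\mu\leftrightarrow\mu'$ and in the treatment of boundary indices), computing the degree of each factor correctly, and checking that the summation range is exactly $\mu\subseteq(m,m,m,m)$ so that the type $(m,m)$ is genuinely present. Once the formula is fixed, positivity of all coefficients makes the degree and leading-coefficient computations routine.
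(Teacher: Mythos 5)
This statement is left as a conjecture in the paper: no proof is given there, only the numerical evidence of the tables for $1\le m\le 4$ computed from Corollary \ref{Cor_subgroups}. Your argument therefore does not parallel anything in the paper; it is an independent (and, as far as I can check, complete and correct) proof by an entirely different route, namely the classical Delsarte--Butler formula for the number $a_\lambda(\mu;p)$ of subgroups of type $\mu$ in an abelian $p$-group of type $\lambda$, which the paper mentions in its introduction (via \cite{But1994}) but never exploits. The formula you quote is the standard one, your degree computation $\deg_p a_\lambda(\mu;p)=\sum_i \mu_i'(\lambda_i'-\mu_i')$ is right (each factor is $p^{\mu_{i+1}'(\lambda_i'-\mu_i')}$ times a monic Gaussian binomial of degree $(\mu_i'-\mu_{i+1}')(\lambda_i'-\mu_i')$, all with nonnegative coefficients, so no cancellation), and the punchline is exactly the uniqueness of the integer maximiser of $t\mapsto t(4-t)$ on $\{0,\dots,4\}$, which singles out $\mu=(m,m)$ as the sole type of top degree $4m$ and forces leading coefficient $1$. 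Your sanity checks are also correct: for $\lambda=(b,a)$ the maximiser $t=1$ of $t(2-t)$ is unique but the tail indices $a<i\le b$ contribute degree $0$ with $b-a+1$ choices, reproducing the leading coefficient $b-a+1$ of \eqref{pol_rank_two}; and the same maximisation over $\lambda=(d,c,b,a)$ gives $2a+b+c$, so your method simultaneously settles the paper's first conjecture on the degree of $N(p^a,p^b,p^c,p^d)$ (though there the leading coefficient need not be $1$, since the maximiser of $t(3-t)$ is not unique). Compared with the paper's Goursat-based recursion, which yields an algorithm but no structural control of the resulting polynomial, your type-decomposition buys exactly the qualitative information the conjectures ask for; the only dependency to nail down in a final write-up is the precise normalisation of the type-counting formula, which your checks against \eqref{pol_rank_two} and the $m=1$ table entry $5+3p+4p^2+3p^3+p^4$ already confirm.
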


\[
\vbox{\offinterlineskip  \hrule \halign{ \strut \vrule  $\ # \
$ \hfill & \vrule  $\ #  $ \hfill  \vrule \cr
 N(p,p,p,p) & 5+ 3p+4p^2+3p^3+p^4 \cr \noalign{\hrule \hrule}
 N(p,p,p,p;1)& 1  \cr \noalign{\hrule}
 N(p,p,p,p;p) & 1+p+ p^2+p^3  \cr \noalign{\hrule}
 N(p,p,p,p;p^2) & 1+p+2p^2+p^3+p^4 \cr} \hrule}
\]

\[
\vbox{\offinterlineskip  \hrule \halign{ \strut \vrule  $\ # \
$ \hfill & \vrule  $\ #  $ \hfill  \vrule \cr
 N(p,p,p,p^2) & 6+ 4p+6p^2+6p^3+2p^4 \cr \noalign{\hrule \hrule}
 N(p,p,p,p^2;1)& 1  \cr \noalign{\hrule}
 N(p,p,p,p^2;p) & 1+p+ p^2+p^3  \cr \noalign{\hrule}
 N(p,p,p,p^2;p^2) & 1+p+2p^2+2p^3+p^4 \cr} \hrule}
\]

\[
\vbox{\offinterlineskip  \hrule \halign{ \strut \vrule  $\ # \
$ \hfill & \vrule  $\ #  $ \hfill  \vrule \cr
 N(p,p,p,p^3) & 7+ 5p+8p^2+9p^3+3p^4 \cr \noalign{\hrule \hrule}
 N(p,p,p,p^3;1)& 1  \cr \noalign{\hrule}
 N(p,p,p,p^3;p) & 1+p+ p^2+p^3  \cr \noalign{\hrule}
 N(p,p,p,p^3;p^2) & 1+p+2p^2+2p^3+p^4 \cr \noalign{\hrule}
 N(p,p,p,p^3;p^3) & 1+p+2p^2+3p^3+p^4  \cr} \hrule}
\]

\[
\vbox{\offinterlineskip  \hrule \halign{ \strut \vrule  $\ # \
$ \hfill & \vrule  $\ #  $ \hfill  \vrule \cr
 N(p,p,p^2,p^2) & 7+ 5p+8p^2+9p^3+6p^4+p^5 \cr \noalign{\hrule \hrule}
 N(p,p,p^2,p^2;1)& 1  \cr \noalign{\hrule}
 N(p,p,p^2,p^2;p) & 1+p+ p^2+p^3  \cr \noalign{\hrule}
 N(p,p,p^2,p^2;p^2) & 1+p+2p^2+2p^3+2p^4 \cr \noalign{\hrule}
 N(p,p,p^2,p^2;p^3) & 1+p+2p^2+3p^3+2p^4+p^5  \cr} \hrule}
\]

\[
\vbox{\offinterlineskip  \hrule \halign{ \strut \vrule  $\ # \
$ \hfill & \vrule  $\ #  $ \hfill  \vrule \cr
 N(p,p,p^2,p^3) & 8+ 6p+10p^2+12p^3+10p^4+2p^5 \cr \noalign{\hrule \hrule}
 N(p,p,p^2,p^3;1)& 1  \cr \noalign{\hrule}
 N(p,p,p^2,p^3;p) & 1+p+ p^2+p^3  \cr \noalign{\hrule}
 N(p,p,p^2,p^3;p^2) & 1+p+2p^2+2p^3+2p^4 \cr \noalign{\hrule}
 N(p,p,p^2,p^3;p^3) & 1+p+2p^2+3p^3+3p^4+p^5 \cr} \hrule}
\]

\[
\vbox{\offinterlineskip  \hrule \halign{ \strut \vrule  $\ # \
$ \hfill & \vrule  $\ #  $ \hfill  \vrule \cr
 N(p,p,p^3,p^3) & 9+ 7p+12p^2+15p^3+14p^4+6p^5+p^6 \cr \noalign{\hrule \hrule}
 N(p,p,p^3,p^3;1)& 1  \cr \noalign{\hrule}
 N(p,p,p^3,p^3;p) & 1+p+ p^2+p^3  \cr \noalign{\hrule}
 N(p,p,p^3,p^3;p^2) & 1+p+2p^2+2p^3+2p^4 \cr \noalign{\hrule}
 N(p,p,p^3,p^3;p^3) & 1+p+2p^2+3p^3+3p^4+2p^5 \cr \noalign{\hrule}
 N(p,p,p^3,p^3;p^4) & 1+p+2p^2+3p^3+4p^4+2p^5+p^6 \cr} \hrule}
\]

\[
\vbox{\offinterlineskip  \hrule \halign{ \strut \vrule  $\ # \
$ \hfill & \vrule  $\ #  $ \hfill  \vrule \cr
 N(p,p^2,p^2,p^2) & 8+ 6p+10p^2+12p^3+10p^4+6p^5+2p^6 \cr \noalign{\hrule \hrule}
 N(p,p^2,p^2,p^2;1)& 1  \cr \noalign{\hrule}
 N(p,p^2,p^2,p^2;p) & 1+p+ p^2+p^3  \cr \noalign{\hrule}
 N(p,p^2,p^2,p^2;p^2) & 1+p+2p^2+2p^3+2p^4+p^5 \cr \noalign{\hrule}
 N(p,p^2,p^2,p^2;p^3) & 1+p+2p^2+3p^3+3p^4+2p^5+p^6 \cr} \hrule}
\]

\[
\vbox{\offinterlineskip  \hrule \halign{ \strut \vrule  $\ # \
$ \hfill & \vrule  $\ #  $ \hfill  \vrule \cr
 N(p,p^2,p^2,p^3) & 9+ 7p+12p^2+15p^3+14p^4+11p^5+4p^6 \cr \noalign{\hrule \hrule}
 N(p,p^2,p^2,p^3;1)& 1  \cr \noalign{\hrule}
 N(p,p^2,p^2,p^3;p) & 1+p+ p^2+p^3  \cr \noalign{\hrule}
 N(p,p^2,p^2,p^3;p^2) & 1+p+2p^2+2p^3+2p^4+p^5 \cr \noalign{\hrule}
 N(p,p^2,p^2,p^3;p^3) & 1+p+2p^2+3p^3+3p^4+3p^5+p^6 \cr \noalign{\hrule}
 N(p,p^2,p^2,p^3;p^4) & 1+p+2p^2+3p^3+4p^4+3p^5+2p^6 \cr} \hrule}
\]

\[
\vbox{\offinterlineskip  \hrule \halign{ \strut \vrule  $\ # \
$ \hfill & \vrule  $\ #  $ \hfill  \vrule \cr
 N(p,p^2,p^3,p^3) & 10+ 8p+14p^2+18p^3+18p^4+16p^5+10p^6+2p^7 \cr \noalign{\hrule \hrule}
 N(p,p^2,p^3,p^3;1)& 1  \cr \noalign{\hrule}
 N(p,p^2,p^3,p^3;p) & 1+p+ p^2+p^3  \cr \noalign{\hrule}
 N(p,p^2,p^3,p^3;p^2) & 1+p+2p^2+2p^3+2p^4+p^5 \cr \noalign{\hrule}
 N(p,p^2,p^3,p^3;p^3) & 1+p+2p^2+3p^3+3p^4+3p^5+2p^6 \cr \noalign{\hrule}
 N(p,p^2,p^3,p^3;p^4) & 1+p+2p^2+3p^3+4p^4+4p^5+3p^6+p^7 \cr} \hrule}
\]

\[
\vbox{\offinterlineskip  \hrule \halign{ \strut \vrule  $\ # \
$ \hfill & \vrule  $\ #  $ \hfill  \vrule \cr
 N(p,p^3,p^3,p^3) & 11+ 9p+16p^2+21p^3+22p^4+21p^5+16p^6+9p^7+3p^8 \cr \noalign{\hrule \hrule}
 N(p,p^3,p^3,p^3;1)& 1  \cr \noalign{\hrule}
 N(p,p^3,p^3,p^3;p) & 1+p+ p^2+p^3  \cr \noalign{\hrule}
 N(p,p^3,p^3,p^3;p^2) & 1+p+2p^2+2p^3+2p^4+p^5 \cr \noalign{\hrule}
 N(p,p^3,p^3,p^3;p^3) & 1+p+2p^2+3p^3+3p^4+3p^5+2p^6+p^7 \cr \noalign{\hrule}
 N(p,p^3,p^3,p^3;p^4) & 1+p+2p^2+3p^3+4p^4+4p^5+4p^6+2p^7+p^8 \cr \noalign{\hrule}
 N(p,p^3,p^3,p^3;p^5) & 1+p+2p^2+3p^3+4p^4+5p^5+4p^6+3p^7+p^8 \cr} \hrule}
\]

\[
\vbox{\offinterlineskip  \hrule \halign{ \strut \vrule  $\ # \
$ \hfill & \vrule  $\ #  $ \hfill  \vrule \cr
 N(p^2,p^2,p^2,p^2) & 9+ 7p+12p^2+15p^3+14p^4+11p^5+9p^6+3p^7+p^8 \cr \noalign{\hrule \hrule}
 N(p^2,p^2,p^2,p^2;1)& 1  \cr \noalign{\hrule}
 N(p^2,p^2,p^2,p^2;p) & 1+p+ p^2+p^3  \cr \noalign{\hrule}
 N(p^2,p^2,p^2,p^2;p^2) & 1+p+2p^2+2p^3+2p^4+p^5+p^6 \cr \noalign{\hrule}
 N(p^2,p^2,p^2,p^2;p^3) & 1+p+2p^2+3p^3+3p^4+3p^5+2p^6+p^7 \cr \noalign{\hrule}
 N(p^2,p^2,p^2,p^2;p^4) & 1+p+2p^2+3p^3+4p^4+3p^5+3p^6+p^7+p^8 \cr} \hrule}
\]

\[
\vbox{\offinterlineskip  \hrule \halign{ \strut \vrule  $\ # \
$ \hfill & \vrule  $\ #  $ \hfill  \vrule \cr
 N(p^2,p^2,p^2,p^3) & 10+ 8p+14p^2+18p^3+18p^4+16p^5+16p^6+6p^7+2p^8 \cr \noalign{\hrule \hrule}
 N(p^2,p^2,p^2,p^3;1)& 1  \cr \noalign{\hrule}
 N(p^2,p^2,p^2,p^3;p) & 1+p+ p^2+p^3  \cr \noalign{\hrule}
 N(p^2,p^2,p^2,p^3;p^2) & 1+p+2p^2+2p^3+2p^4+p^5+p^6 \cr \noalign{\hrule}
 N(p^2,p^2,p^2,p^3;p^3) & 1+p+2p^2+3p^3+3p^4+3p^5+3p^6+p^7 \cr \noalign{\hrule}
 N(p^2,p^2,p^2,p^3;p^4) & 1+p+2p^2+3p^3+4p^4+4p^5+4p^6+2p^7+p^8 \cr} \hrule}
\]

\[
\vbox{\offinterlineskip  \hrule \halign{ \strut \vrule  $\ # \
$ \hfill & \vrule  $\ #  $ \hfill  \vrule \cr
 N(p^2,p^2,p^3,p^3) & 11+ 9p+16p^2+21p^3+22p^4+21p^5+23p^6+14p^7+6p^8+p^9 \cr \noalign{\hrule \hrule}
 N(p^2,p^2,p^3,p^3;1)& 1  \cr \noalign{\hrule}
 N(p^2,p^2,p^3,p^3;p) & 1+p+ p^2+p^3  \cr \noalign{\hrule}
 N(p^2,p^2,p^3,p^3;p^2) & 1+p+2p^2+2p^3+2p^4+p^5+p^6 \cr \noalign{\hrule}
 N(p^2,p^2,p^3,p^3;p^3) & 1+p+2p^2+3p^3+3p^4+3p^5+3p^6+2p^7 \cr \noalign{\hrule}
 N(p^2,p^2,p^3,p^3;p^4) & 1+p+2p^2+3p^3+4p^4+4p^5+5p^6+3p^7+2p^8 \cr \noalign{\hrule}
 N(p^2,p^2,p^3,p^3;p^5) & 1+p+2p^2+3p^3+4p^4+5p^5+5p^6+4p^7+2p^8+p^9 \cr} \hrule}
\]

\[
\vbox{\offinterlineskip  \hrule \halign{ \strut \vrule  $\ # \ $
\hfill & \vrule  $\ #  $ \hfill  \vrule \cr
 N(p^2,p^3,p^3,p^3) & 12+ 10p+18p^2+24p^3+26p^4+26p^5+30p^6+22p^7+16p^8+6p^9+2p^{10} \cr \noalign{\hrule \hrule}
 N(p^2,p^3,p^3,p^3;1)& 1  \cr \noalign{\hrule}
 N(p^2,p^3,p^3,p^3;p) & 1+p+ p^2+p^3  \cr \noalign{\hrule}
 N(p^2,p^3,p^3,p^3;p^2) & 1+p+2p^2+2p^3+2p^4+p^5+p^6 \cr \noalign{\hrule}
 N(p^2,p^3,p^3,p^3;p^3) & 1+p+2p^2+3p^3+3p^4+3p^5+3p^6+2p^7+p^8 \cr \noalign{\hrule}
 N(p^2,p^3,p^3,p^3;p^4) & 1+p+2p^2+3p^3+4p^4+4p^5+5p^6+4p^7+3p^8+p^9 \cr \noalign{\hrule}
 N(p^2,p^3,p^3,p^3;p^5) & 1+p+2p^2+3p^3+4p^4+5p^5+6p^6+5p^7+4p^8+2p^9+p^{10}  \cr} \hrule}
\]

\[
\vbox{\offinterlineskip  \hrule \halign{ \strut \vrule  $\ # \ $
\hfill & \vrule  $\ #  $ \hfill  \vrule \cr
 N(p^3,p^3,p^3,p^3) & 13+
 11p+20p^2+27p^3+30p^4+31p^5+37p^6+30p^7+26p^8+18p^9 \cr
                    & +9p^{10}+3p^{11}+p^{12}
 \cr \noalign{\hrule \hrule}
 N(p^3,p^3,p^3,p^3;1)& 1  \cr \noalign{\hrule}
 N(p^3,p^3,p^3,p^3;p) & 1+p+ p^2+p^3  \cr \noalign{\hrule}
 N(p^3,p^3,p^3,p^3;p^2) & 1+p+2p^2+2p^3+2p^4+p^5+p^6 \cr \noalign{\hrule}
 N(p^3,p^3,p^3,p^3;p^3) & 1+p+2p^2+3p^3+3p^4+3p^5+3p^6+2p^7+p^8+p^9 \cr \noalign{\hrule}
 N(p^3,p^3,p^3,p^3;p^4) & 1+p+2p^2+3p^3+4p^4+4p^5+5p^6+4p^7+4p^8+2p^9+p^{10} \cr \noalign{\hrule}
 N(p^3,p^3,p^3,p^3;p^5) & 1+p+2p^2+3p^3+4p^4+5p^5+6p^6+6p^7+5p^8+4p^9+2p^{10}+p^{11} \cr \noalign{\hrule}
 N(p^3,p^3,p^3,p^3;p^6) & 1+p+2p^2+3p^3+4p^4+5p^5+7p^6+6p^7+6p^8+4p^9+3p^{10}+p^{11}+p^{12}\cr} \hrule}
\]

\[
\vbox{\offinterlineskip  \hrule \halign{ \strut \vrule  $\ # \ $
\hfill & \vrule  $\ #  $ \hfill  \vrule \cr
 N(p^4,p^4,p^4,p^4) & 17+ 15p+28p^2+39p^3+46p^4+51p^5+65p^6+62p^7+66p^8+66p^9 \cr
                    & +58p^{10}+46p^{11}+35p^{12}+18p^{13}+9p^{14}+3p^{15}+p^{16}
 \cr \noalign{\hrule \hrule}
 N(p^4,p^4,p^4,p^4;1)& 1  \cr \noalign{\hrule}
 N(p^4,p^4,p^4,p^4;p) & 1+p+ p^2+p^3  \cr \noalign{\hrule}
 N(p^4,p^4,p^4,p^4;p^2) & 1+p+2p^2+2p^3+2p^4+p^5+p^6 \cr \noalign{\hrule}
 N(p^4,p^4,p^4,p^4;p^3) & 1+p+2p^2+3p^3+3p^4+3p^5+3p^6+2p^7+p^8+p^9 \cr \noalign{\hrule}
 N(p^4,p^4,p^4,p^4;p^4) & 1+p+2p^2+3p^3+4p^4+4p^5+5p^6+4p^7+4p^8+3p^9+2p^{10}+p^{11}+p^{12} \cr \noalign{\hrule}
 N(p^4,p^4,p^4,p^4;p^5) & 1+p+2p^2+3p^3+4p^4+5p^5+6p^6+6p^7+6p^8+6p^9+5p^{10}+4p^{11}+2p^{12} \cr
                        & +p^{13} \cr \noalign{\hrule}
 N(p^4,p^4,p^4,p^4;p^6) & 1+p+2p^2+3p^3+4p^4+5p^5+7p^6+7p^7+8p^8+8p^9+8p^{10}+6p^{11}+5p^{12} \cr
                        & +2p^{13}+p^{14} \cr \noalign{\hrule}
 N(p^4,p^4,p^4,p^4;p^7) & 1+p+2p^2+3p^3+4p^4+5p^5+7p^6+8p^7+9p^8+10p^9+9p^{10}+8p^{11}+6p^{12} \cr
                        & +4p^{13}+2p^{14}+p^{15} \cr \noalign{\hrule}
 N(p^4,p^4,p^4,p^4;p^8) & 1+p+2p^2+3p^3+4p^4+5p^5+7p^6+8p^7+10p^8+10p^9+10p^{10}+8p^{11}+7p^{12} \cr
                        & +4p^{13}+3p^{14}+p^{15}+p^{16} \cr} \hrule}
\]

\subsection{Values of $N(n)$}

The values of $N(n)$ for $1\le n\le 30$ are given by the next table.

\[
\vbox{\offinterlineskip \hrule \halign{ \strut
\vrule \hfill $\ # \ $ \hfill
& \vrule \hfill $\ # \ $
& \vrule \vrule  \hfill $\ # \ $ \hfill
& \vrule \hfill $\ # \ $
& \vrule \vrule \hfill $\ # \ $  \hfill
& \vrule \hfill $\ # \ $  \vrule \cr
 n & N(n) & n & N(n) & n & N(n) \ \cr \noalign{\hrule}
 1 & 1         & 11 & 19\, 156      & 21  & 774\, 224   \cr \noalign{\hrule}
 2 & 67        & 12 & 420\, 396     & 22  & 1\, 283\, 452   \cr \noalign{\hrule}
 3 & 212       & 13 & 35\, 872      & 23  & 318\, 532  \cr \noalign{\hrule}
 4 & 1\, 983   & 14 & 244\, 684     & 24  & 9\, 187\, 868   \cr \noalign{\hrule}
 5 & 1\, 120   & 15 & 237\, 440     & 25 & 810\, 969  \cr \noalign{\hrule}
 6 & 14\, 204  & 16 & 821\, 335     & 26 & 2\, 403\, 424   \cr \noalign{\hrule}
 7 & 3\, 652   & 17 & 99\, 472      & 27 & 2\, 222\, 704 \cr \noalign{\hrule}
 8 & 43\, 339  & 18 & 1\, 610\, 211 & 28 & 7\, 241\, 916  \cr \noalign{\hrule}
 9 & 24\, 033  & 19 & 152\, 404     & 29 & 783\, 904   \cr \noalign{\hrule}
 10 & 75\, 040 & 20 & 2\, 220\, 960 & 30 & 15\, 908\, 480  \cr} \hrule}
\]

\section{Acknowledgment} The author thanks M\'aty\'as Koniorczyk for
his assistance during computations.

\medskip

\noindent L\'aszl\'o T\'oth \\
Department of Mathematics \\
University of P\'ecs \\
Ifj\'us\'ag \'utja 6, H-7624 P\'ecs, Hungary
\\ E-mail: {\tt ltoth@gamma.ttk.pte.hu}


\begin{thebibliography}{99}
\bibitem{BSZ2015} K.~Bauer, D.~Sen, P.~Zvengrowski, A generalized Goursat lemma,
{\it Tatra Mt. Math. Publ.} {\bf 64} (2015), 1--19.

\bibitem{Bho1996} G.~Bhowmik, Evaluation of divisor functions of matrices,
{\it Acta Arith.} {\bf 74} (1996), 155--159.

\bibitem{But1987} L.~M.~Butler, A unimodality result in the enumeration of subgroups of a finite
abelian group, {\it Proc. Amer. Math. Soc.} {\bf 101} (1987), 771--775.

\bibitem{But1994} L.~M.~Butler, {\it Subgroup Lattices and Symmetric Functions},
Mem. Amer. Math. Soc., vol. 112, no. {\bf 539}, 1994.

\bibitem{Cal2004} G.~C\u{a}lug\u{a}reanu, The total number of subgroups of a finite
abelian group, {\it Sci. Math. Jpn.} {\bf 60} (2004), 157--167.

\bibitem{Che2015} C.~Y.~Chew, A.~Y.~M. Chin, and C.~S.~Lim, The number of subgroups of a finite abelian $p$-group
of rank $4$, AIP Conference Proceedings 1682, 040002 (2015). \url{http://dx.doi.org/10.1063/1.4932475}

\bibitem{GG2008} M.~Golasi\'nski and D.~L.~Gon\c{c}alves, On
automorphisms of finite abelian $p$-groups, {\it Math. Slovaca} {\bf
58} (2008), 405--412.

\bibitem{HHTW2014} M.~Hampejs, N.~Holighaus, L.~T\'oth, and C.~Wiesmeyr, Representing and counting the subgroups of the
group $\Z_m \times \Z_n$, {\it Journal of Numbers}, vol. 2014, Article ID 491428.

\bibitem{HamTot2013} M.~Hampejs and L.~T\'oth, On the subgroups of finite Abelian groups of
rank three, {\it Annales Univ. Sci. Budapest., Sect. Comp.} {\bf 39} (2013), 111--124.

\bibitem{Hil} Ch.~J.~Hillar and D.~L.~Rhea, Automorphisms of finite abelian groups,
{\it Amer. Math. Monthly} {\bf 114} (2007), 917--923.

\bibitem{Oh2013} Ju-Mok~Oh, An explicit formula for the number of subgroups of a finite abelian $p$-group up to rank $3$,
{\it Commun. Korean Math. Soc.} {\bf 28} (2013), 649--667.

\bibitem{Pet2011} J.~Petrillo, Counting subgroups in a direct product of finite cyclic groups,
{\it College Math. J.} {\bf 42} (2011), 215--222.

\bibitem{Sho1972} V. N. Shokuev, An expression for the number of subgroups of a given order of a finite $p$-group,
{\it Math. Notes} (Translation of Mat. Zametki) {\bf 12} (1972), 774--778.

\bibitem{Ste1992} T.~Stehling, On computing the number of subgroups of a finite Abelian group, {\it
Combinatorica} {\bf 12} (1992), 475--479.

\bibitem{Tak1998} Y.~Takegahara, On Butler's unimodality result, {\it
Combinatorica} {\bf 18} (1998), 437--439.

\bibitem{Tar2010} M.~T\u{a}rn\u{a}uceanu, An arithmetic method of counting the
subgroups of a finite abelian group, {\it Bull. Math. Soc. Sci.
Math. Roumanie (N.S.)} {\bf 53(101)} (2010), 373--386.

\bibitem{TarTot} M.~T\u{a}rn\u{a}uceanu and L.~T\'oth, On the number of subgroups of a given
exponent in a finite abelian group, {\it Publ. Inst. Math. (Beograd) (N.S.)}, accepted.
\url{http://arxiv.org/abs/1507.00532}

\bibitem{TotTatra2014} L.~T\'oth, Subgroups of finite Abelian groups having rank two via
Goursat's lemma, {\it Tatra Mt. Math. Publ.} {\bf 59} (2014),
93--103. \url{http://arxiv.org/abs/1312.1485}

\bibitem{Tot2014} L.~T\'oth, Multiplicative arithmetic functions of several variables:
a survey, in {\it Mathematics Without Boundaries, Surveys in Pure
Mathematics}. Th.~M.~Rassias, P.~Pardalos (Eds.), Springer, New
York, 2014, 483--514. \url{http://arxiv.org/abs/1310.7053}

\end{thebibliography}
\end{document}